\documentclass[11pt]{article}
\usepackage{amsmath,amssymb,amsthm,xcolor,graphicx,bbm}
\usepackage{accents}
\usepackage[unicode,breaklinks=true,colorlinks=true,citecolor = {magenta}]{hyperref}
\usepackage{esint}  %
\usepackage{listings}  %

\usepackage[top=1in, bottom=1in, left=1.25in, left=1.25in]{geometry}

\usepackage{titletoc}
\titlecontents{section}[1.5em]{\vspace{-2pt}}%
{\thecontentslabel\enspace\enspace}%
{}%
{\titlerule*[0.5pc]{.}\contentspage}
\titlecontents{subsection}[3em]{\vspace{-2.5pt}}%
{\thecontentslabel\enspace\enspace}%
{}%
{\titlerule*[0.5pc]{.}\contentspage}%

\numberwithin{equation}{section}
\newtheorem{theorem}{Theorem}[section]

\newtheorem{lemma}[theorem]{Lemma}
\newtheorem{proposition}[theorem]{Proposition}

\newtheorem{conjecture}[theorem]{Conjecture}

\theoremstyle{remark}
\newtheorem{remark}[theorem]{Remark}
\newcommand{\bke}[1]{\left ( #1 \right )}

\newcommand{\bket}[1]{\left \{ #1 \right \}}
\newcommand{\norm}[1]{\left  \| #1  \right \|}
\newcommand{\bka}[1]{{\langle #1 \rangle}}

\newcommand\ga{\gamma}
\newcommand\de{\delta}
\newcommand\ep{\epsilon}

\newcommand\e {\varepsilon}

\newcommand\la{\lambda}
\newcommand\si{\sigma}

\newcommand\De{\Delta}

\newcommand\Om{\Omega}
\newcommand{\R}{\mathbb{R}}

\newcommand{\FF}{\mathbb{F}}
\newcommand{\GG}{\mathbb{G}}

\newcommand{\NN}{\mathbb{N}}

\newcommand{\cW}{\mathcal{W}}

\renewcommand{\div}{\mathop{\rm div}\nolimits}

\newcommand{\pd}{\partial}
\newcommand{\nb}{\nabla}

\newcommand{\td}{\tilde}

\newcommand{\oo}{\infty}

\newcommand{\EQ}[1]{\begin{equation} #1 \end{equation}}
\newcommand{\Eq}[1]{\begin{equation*} #1 \end{equation*}}
\newcommand{\EQS}[1]{\begin{equation}\begin{split} #1 \end{split}\end{equation}}
\newcommand{\EQN}[1]{\begin{equation*}\begin{split} #1 \end{split}\end{equation*}}

\newcommand{\EN}[1]{\begin{enumerate} #1 \end{enumerate}}

\usepackage{mathtools} %

\newcommand{\loc}{\mathrm{loc}}

\begin{document}
\title{Existence and regularity for perturbed Stokes system with critical drift}
\author{Misha Chernobai and Tai-Peng Tsai}
\date{}
\maketitle
\begin{abstract}
We consider the existence and $L^q$ gradient estimates for perturbed Stokes systems with divergence-free critical drift in a bounded Lipschitz domain in $\mathbb{R}^n$, $n \ge 3$. The first two results assume the drift is either in $L^n$
or sufficiently small in weak $L^n$. The third result assumes the drift is in weak $L^n$ without smallness, and obtain results for $q$ close to 2.

\medskip

\emph{2020 Mathematics Subject Classifications}: 35J47, 35Q35
\end{abstract}

\section{Introduction}

Let $\Om$ be a bounded domain in $\R^n$, $n\ge 3$. We consider the existence and gradient estimates of solutions $(u,\pi):\Om \to \R^n \times \R$ of the following perturbed Stokes system
\EQ{\label{drifteq}
-\Delta u +b\cdot\nabla u+\nabla \pi=\div \Bbb G,\quad \div u=0, \quad u|_{\pd \Om}=0,
}
with given  $b$ in the critical space, $b\in L^{n,\oo}(\Omega)^n$, $\div b=0$, and $\Bbb G\in L^q(\Om)^{n\times n}$, $1 <q<\oo$. Here
$L^{q,r}(\Om)$ denotes Lorentz spaces. We use the convention
$(\div \Bbb G)_i=\pd_j \GG_{ji}$ and $(\nb \zeta)_{ji} = \pd_j \zeta_i$ so that
\EQ{\label{1.2}
\De u = \div (\nb u),\quad
b\cdot\nabla u = \div (b\otimes u), \quad \int_\Om  \div \GG \cdot \zeta = -
\int_\Om  \GG:\nb \zeta
}
when $\div b=0$ and boundary term vanishes.
Let
\begin{align*}
C^{\infty}_{c,\si}(\Omega) &= \big\{ u \in C^{\infty}_c(\Omega)^n:\  \div u=0\big\},\\
W^{1,q}_{0,\si}(\Omega) &= \big\{ u \in W^{1,q}_0(\Omega)^n:\  \div u=0\big\},\\ \quad L^q_0(\Om)&=\bket{\pi \in L^q(\Om):\  \textstyle \int_\Om \pi \,dx=0}.
\end{align*}

We say $(u,\pi)$ is a \emph{solution pair} of \eqref{drifteq} if $u\in W^{1,q}_{0,\si}(\Omega)$, $\pi\in L^q_0(\Omega)$, and $(u,\pi)$ satisfies  \eqref{drifteq} in distributional sense, i.e., (using \eqref{1.2}),
\EQ{\label{soln-pair}
\int_{\Om} (\nb u - b \otimes u): \nb \zeta - \pi \div \zeta  = -\int_\Om \GG : \nb \zeta,\quad
\forall \zeta\in C^{\infty}_{c,\si}(\Omega).
}
We say $u$ is a \emph{$q$-weak solution} of \eqref{drifteq} if $u\in W^{1,q}_{0,\si}(\Omega)$ and $u$ satisfies \eqref{drifteq} in weak sense, i.e.,
\EQ{\label{weak-soln}
\int_{\Om} (\nb u - b \otimes u): \nb \zeta   = -\int_\Om \GG : \nb \zeta,\quad
\forall \zeta\in C^{\infty}_{c,\si}(\Omega).
}
We simply say $u$ is a \emph{weak solution} if $q=2$.
We say $u$ is a \emph{very weak solution} of \eqref{drifteq} if $u\in L^{n',1}_\loc(\Omega)$, $\div u=0$, and $u$ satisfies  \eqref{drifteq} in the sense
\EQ{\label{very-weak-soln}
\int_{\Om} - u\cdot \De \zeta - ( b \otimes u): \nb \zeta   = -\int_\Om \GG : \nb \zeta,\quad
\forall \zeta\in C^\infty_{c,\si}(\Omega).
}
Note that this definition of very weak solutions using test fields $ \zeta\in C^\infty_{c,\si}(\Om)$ gives no info on the boundary value of $u$, unlike another definition using $ \zeta\in C^2_0(\Om)$, see \cite[IV, Notes]{Galdi}. 

\smallskip

Our study is motivated by the following open problems. Let $B_r=\{ x \in \R^n: |x|<r\}$ denote the ball of radius $r$ centered at the origin.

\begin{conjecture} \label{conj1}
If a vector field $u$ in $\R^3\setminus \{ 0\}$
satisfies $|u(x)| \le \frac C{|x|}$ for all $x \not=0\in \R^3$ for a large constant $C$, and $u$ is a very weak solution of  the stationary incompressible Navier-Stokes equations in $\R^3\setminus \{ 0\}$, then $u$ is one of the Landau solutions which are minus-one homogeneous and axisymmetric without swirl.
\end{conjecture}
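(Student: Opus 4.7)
\noindent\emph{Proof proposal.}\quad My plan is to reduce Conjecture~\ref{conj1} to the classification of $(-1)$-homogeneous axisymmetric solutions with an isolated singularity at the origin, which is the content of the Landau theorem of \v{S}ver\'ak. Thus the argument splits into three stages: regularity, blow-down/blow-up self-similarity, and rigidity.

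First I would upgrade the regularity of $u$ away from $0$. The bound $|u(x)|\le C/|x|$ puts $u$ in $L^{3,\oo}_\loc(\R^3)$, so the nonlinear term $u\cdot\nb u$ can be read as a critical drift: with $b:=u$ and $\GG:=-u\ot u$ the stationary Navier--Stokes equation becomes a perturbed Stokes system of exactly the type \eqref{drifteq} studied here. Applying the $L^q$ gradient estimate on each annulus $B_{2R}\setminus\wbar{B_{R/2}}$, after cutting off and absorbing the commutator error into a new $\GG$, yields $\nb u\in L^q_\loc(\R^3\setminus\{0\})$ for some $q>2$; classical Stokes regularity then bootstraps $u$ to a smooth solution on $\R^3\setminus\{0\}$ obeying the scale-invariant bound $|\nb u(x)|\le C'/|x|^2$.

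Next I would exploit the scaling invariance. The family $u_\la(x):=\la\, u(\la x)$ satisfies the same pointwise bound and the same system on $\R^3\setminus\{0\}$ for every $\la>0$. The interior estimates above give local compactness, so along $\la\to 0$ and $\la\to\oo$ one extracts limits $u_0$ and $u_\oo$ which are again very weak solutions in $\R^3\setminus\{0\}$; a diagonal rescaling argument (any further rescaling of $u_0$ must coincide with $u_0$) shows both limits are $(-1)$-homogeneous. By \v{S}ver\'ak's classification they are axisymmetric Landau solutions parametrised by the momentum flux $F\in\R^3$ through a sphere enclosing the origin. This flux is conserved across dyadic shells, so it is the same for $u$, $u_0$ and $u_\oo$, leaving a single candidate Landau solution $U_F$.

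The final and genuinely hard step is rigidity: show $u\equiv U_F$. When $C$ is small, subtracting the equations for $w:=u-U_F$ and applying the small-drift version of the estimates in this paper would contract in $L^{3,\oo}$, since $w$ decays faster than $1/|x|$ both at $0$ and at $\oo$; this is essentially the case settled by \v{S}ver\'ak. For arbitrary $C$, however, no uniqueness is available in the critical class, and one must rule out non-homogeneous solutions with identical Landau asymptotics at both ends. A plausible route is a Pohozaev/energy identity on dyadic annuli combined with the improved decay of $w$, but I expect this to be the main obstacle: it is precisely this rigidity that keeps Conjecture~\ref{conj1} open in the large-$C$ regime.
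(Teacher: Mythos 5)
The statement you are addressing is Conjecture~\ref{conj1}, which the paper explicitly presents as an \emph{open problem} that motivates the work; the paper contains no proof of it, so there is nothing to compare against. Your ``proof proposal'' is, by your own closing admission, a strategy outline rather than a proof, and the honest acknowledgment that the rigidity step keeps the conjecture open in the large-$C$ regime is correct. The regularity step you sketch is already known: as the paper notes, a solution with $|u(x)|\le C/|x|$ satisfies $|\nabla^k u(x)|\le C_k |x|^{-1-k}$ by \v{S}ver\'ak--Tsai, so one does not need the critical-drift estimates of this paper for that stage, although your route through them is a reasonable alternative.

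Beyond the rigidity step you flag, there is a second genuine gap that you pass over too quickly. In your blow-down/blow-up stage, the assertion in parentheses --- ``any further rescaling of $u_0$ must coincide with $u_0$'' --- does not follow from compactness alone. The blow-down limit may depend on the subsequence of scales, and even if it does not, there is no known monotone scale-invariant quantity for the stationary Navier--Stokes equations that forces it to be $(-1)$-homogeneous, in contrast with minimal surfaces (monotonicity of mass ratio) or harmonic maps (Almgren frequency). Establishing homogeneity of the asymptotic profile under only the scale-invariant bound $|u|\le C/|x|$ is itself an open problem of essentially the same difficulty as the conjecture, and indeed in some formulations it \emph{is} the conjecture at one end. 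So your scheme contains two open steps, not one: self-similarity of the limits, and rigidity of the intermediate solution between them.
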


Here ``very weak solution'' is in the sense of \eqref{very-weak-soln} with $b=u$.
See \cite{Hishida, JST-survey,Tsai-book} for Landau solutions and the background of this problem. Conjecture \ref{conj1} was stated by Sverak in \cite{Sverak2011}, as well as Miura-Tsai in \cite{MT12}  and Kwon-Tsai \cite{KT}, also for the related studies see \cite{Korolev-Sverak,Li-Li-Yan3}
for related study.
Relevant to Conjecture \ref{conj1} is the following question.

\begin{conjecture} \label{conj2}
Let $n=3$, $b\in L^{3,\infty}(B_2)$, and $\div b=0$. If a vector field $u \in W^{1,(\frac32,\infty)}(B_2)$  is a very weak solution of \eqref{drifteq} with zero force in $B_2$, i.e., it satisfies
\[
\int_{B_2} (\nb u - b \otimes u): \nb \zeta  =0,\quad
\forall \zeta\in C^\infty_{c,\si}(B_2),
\]
then $u \in L^\infty(B_1)$.
\end{conjecture}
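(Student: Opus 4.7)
The plan is a two-step bootstrap: first, upgrade $u$ from the critical class $W^{1,(3/2,\infty)}_\loc\cap L^{3,\infty}_\loc(B_2)$ (where it sits via Sobolev in Lorentz scales) to a strictly supercritical class $L^q_\loc$ with $q>3$; second, run a subcritical iteration to obtain $u\in L^\infty_\loc(B_1)$. The structural input to be exploited in both steps is $\div b=0$, which shifts the bad critical terms in any energy identity onto the cutoff.

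For the first and harder step, a natural try is a local Caccioppoli inequality. On concentric balls $B_{r'}\Subset B_r\Subset B_2$ with a cutoff $\phi\in C^\infty_c(B_r)$, $\phi=1$ on $B_{r'}$, one tests the weak formulation against $\zeta=u\phi^2-\psi$, where $\psi$ is the Bogovskii corrector of $\div(u\phi^2)$. Using $\div b=0$ and integration by parts, the drift contribution collapses to $\int|u|^2(b\cdot\nabla\phi)\phi$, which is controlled via H\"older in Lorentz by $\|b\|_{L^{3,\infty}}\|u\|_{L^{3,\infty}(\supp\nabla\phi)}^2\|\nabla\phi\|_{L^\infty}$. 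Coupling with Stokes regularity and iterating on concentric annuli via Gehring's lemma ought to yield a self-improvement $\nabla u\in L^{(3/2)+\delta}_\loc$, and hence $u\in L^{3+\delta'}_\loc$ by Sobolev. An alternative is duality: solve the dual system \eqref{drifteq_dual} on $B_r$ via the paper's third result (available since $b\in L^{3,\infty}$ with $\div b=0$ and no smallness is required) and test against this field to recognize $u$ in a stronger dual class.

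Once $u\in L^{3+\varepsilon}_\loc$, the product $b\otimes u$ lies in $L^s_\loc$ for some $s>3/2$, i.e., subcritical. A De Giorgi level-set iteration on $(|u|-k)_+\phi$ with cutoff-pressure handled by a Bogovskii correction (or a classical $L^p$ Moser iteration) then closes the argument and gives $u\in L^\infty_\loc(B_1)$.

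The hard part will be the closure of Step~1. The Caccioppoli scheme above still contains the cross term $\int(b\otimes u):\nabla\psi$; since $b\otimes u\in L^{3/2,\infty}$ but the Bogovskii estimate delivers only $\nabla\psi\in L^{3,\infty}$ (not its stronger predual $L^{3,1}$), the pairing sits exactly at the critical Lorentz threshold and cannot be absorbed by perturbation without a smallness assumption on $\|b\|_{L^{3,\infty}}$ (explicitly excluded in the hypotheses). The duality route faces a symmetric difficulty: the dual field from \eqref{drifteq_dual} only has $W^{1,q'}$ regularity with $q'$ close to $2$, again at critical scaling. A successful attack likely requires a compactness/blow-up procedure, or a finer representation $b=\div\mathbf{b}$ with $\mathbf{b}\in\BMO$ to move a derivative off the drift (in the spirit of known scalar-equation results for critical divergence-free drifts). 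Adapting such techniques to the vectorial Stokes system, with its accompanying pressure, is the principal technical challenge and almost certainly the reason the statement appears as a conjecture.
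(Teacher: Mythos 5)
The statement you were asked to prove is labelled a \emph{conjecture} in the paper, and the paper offers no proof of it; it is posed as an open problem motivating the results that follow (Proposition~\ref{main1}, Theorems~\ref{main2} and~\ref{main3}). Your ``proposal'' is therefore not expected to match any argument in the paper, and indeed your own final paragraph concedes that Step~1 does not close. That concession is the right conclusion: what you have written is a well-informed account of why the problem is hard, not a proof.

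Your diagnosis of the obstruction is accurate and matches the paper's framing. The Caccioppoli/Gehring route founders because $b\otimes u\in L^{3/2,\infty}$ paired against a Bogovskii corrector with $\nabla\psi\in L^{3,\infty}$ sits exactly at critical Lorentz duality, so absorption fails without smallness of $\|b\|_{L^{3,\infty}}$ --- and smallness is precisely what the conjecture refuses to assume (the small-drift case is already handled by Kim--Kozono, as the paper notes). Your alternative duality route via Theorem~\ref{main3} is blocked for the reason you identify: that theorem only yields $W^{1,q}$ estimates for $q$ in a narrow window around $2$ determined by $\|b\|_{L^{3,\infty}}$, which does not beat the critical scaling either; indeed the gap between the conjecture and what Theorem~\ref{main3} supplies is, implicitly, the content of this paper. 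Your remark about the pressure being the principal new difficulty relative to the scalar theory, and the suggestion that a $\BMO^{-1}$ representation of $b$ or a compactness argument might be needed, are also consistent with the paper's discussion of the difficulties it circumvents only partially (via Wolf's local pressure projection, yielding $q$ near $2$ rather than $q>3$). In short: no gap to report beyond the one you already named, because the statement is open and you correctly said so.

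Two minor technical notes worth keeping in mind should you pursue this further. First, in your Step~2 you assume $u\in L^{3+\varepsilon}_{\loc}$ makes $b\otimes u$ subcritical; this is true ($b\otimes u\in L^{s}_{\loc}$ with $s>3/2$ by Lorentz--H\"older), but running De~Giorgi or Moser on the \emph{vector} quantity $(|u|-k)_+$ still requires controlling the pressure at each truncation level, and the Bogovskii correction you propose reintroduces a boundary/cutoff term that must itself be shown subcritical; this is nontrivial but plausible once $q>3$. Second, your Step~1 Caccioppoli identity should be written with the corrector $\psi$ solving $\div\psi=\nabla\phi^2\cdot u$ (not $\div(u\phi^2)$, which is not mean-zero without the correction term being subtracted); the way you stated it the test function is not divergence-free.
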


The condition $u \in W^{1,(3/2,\infty)}(B_2)$ is because a solution of Conjecture \ref{conj1} satisfying $|u(x)| \le \frac C{|x|}$ is shown to satisfy $|\nb^k u(x)| \le C_k |x|^{-1-k}$, $k \in \NN$, see \cite{Sverak-Tsai}. When $b=u$, Conjecture \ref{conj2} has been proved by
Kim \& Kozono \cite{KiKo} when $u$ is sufficiently small in $L^{3,\infty}(B_2)$. {For higher dimensional results see e.g.~\cite{BGLWX}.}

To study these conjectures, it seems necessary to have a better understanding of the regularity problem of vector solutions of the perturbed Stokes system \eqref{drifteq}.

We first review what is known for the corresponding equations for \emph{scalar} functions.
In the literature, we allow $\div b \not =0$, and consider weak solutions  of
\EQ{
-\Delta u +\nabla \cdot(ub)=\div F, \quad u|_{\pd \Om}=0,
\label{scalareq-div}
}
and its dual problem
\EQ{
-\Delta v -b\cdot\nabla v=\div G, \quad v|_{\pd \Om}=0.
\label{scalareq-conv}
}
Existence, uniqueness, and regularity of weak solutions in $W^{1,q}(\Omega)$ or  $W^{2,q}(\Omega)$, $1<q<\infty$, of \eqref{scalareq-div} and \eqref{scalareq-conv} have been well known for sufficiently regular $b$;
for instance, see  \cite[Theorem 9.15]{GT}  for $b\in L^\infty(\Omega)$, and \cite[Chap.~III, Theorem 15.1]{LU} for more general $b \in L^m(\Om; \R^n )$, $m>n$. This is called the ``\emph{subcritical case}'', in which the lower order terms ($\nabla \cdot(vb)$ and $b\cdot\nabla v$) may be treated as perturbations of  the leading term $-\De v$.
In the ``\emph{critical case}'',  the coefficient $b$  belongs to \emph{critical}  spaces, that is,
\[%
b \in L^n(\Om ; \R^n), \quad \text{or}\quad
b \in L^{n,\infty}(\Om; \R^n),
\]
which prevents us from treating the lower order terms as  perturbations. For general $b \in L^{n,\infty}$, \eqref{scalareq-div} may have no solution, and \eqref{scalareq-conv} may have no uniqueness,  see e.g.~\cite[Example 1.1]{KPT}. Lower order terms with critical coefficients can be “controlled” in a few cases. The first case is when $b$ is small in $L^{n,\infty}$. The second case is when $b\in L^n$  or $b \in L^{n,r}$, $r<\infty$, so that the norms become small in sufficiently small balls although they may be large in the entire domain $\Om$. In both case we gain certain smallness of the drift term.
The third is case when $\div b=0$ or $\div b$ has a fixed sign. In this case, the drift term may become harmless in De Giorgi or Nash type estimates after integration by parts.  See the vast references of \cite{KPT}, in which an attempt is made to combine these cases. In general case, when drift is not small or divergence free there are similar results under additional assumptions on the structure of the drift, see \cite{CS20,CS24}.

\smallskip

When we consider the perturbed Stokes system \eqref{drifteq} {with critical drift,} even less is known. The presence of the pressure term $\nb p$ adds new difficulties to the methods used to study the scalar equations
\eqref{scalareq-div} and \eqref{scalareq-conv}.

In this note will present a few results on the perturbed Stokes system \eqref{drifteq} assuming $\div b=0$ (so that we do not need to study its dual system, which is now equivalent to
\eqref{drifteq}).
Our first two results, {Proposition \ref{main1} and Theorem \ref{main2},} give gradient estimates of \eqref{drifteq} assuming smallness conditions on the drift $b$ in critical spaces. The condition $b\in L^n(\Om)^n$ in {Proposition \ref{main1}} can be considered a smallness condition because it
implies $$\lim_{r \to 0_+} \sup_{x_0 \in \Om}\norm{b}_{L^n(\Om \cap B_r(x_0))}=0.$$ They are similar to the case of the scalar equations, and should be no surprise to experts.
\begin{proposition}\label{main1}
Let $1<q<\infty$ and $\Om$ be a bounded Lipschitz domain in $\R^n$, $n\ge 3$,
with sufficiently small Lipschitz constant $L>0$ $($i.e., $L \le L_0$ where $L_0=L_0(n,q)>0)$.
Assume $b\in L^n(\Omega)^n$, $\div b=0$.  Then for any $\GG\in L^q(\Om)^{n\times n}$, , there exists a unique solution pair $u\in W^{1,q}_{0,\si}(\Omega)$ and $\pi\in L^q(\Omega)$ of \eqref{drifteq} such that
$\int_\Om \pi\,dx=0$ and
\EQ{\label{thm1.3-eq}
\|u\|_{ W^{1,q}(\Omega)}+\|\pi\|_{L^q(\Omega)}\le C\|\Bbb G\|_{L^q(\Omega)}.
}
\end{proposition}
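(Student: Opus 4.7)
My plan is to treat the drift as a compact perturbation of the pure Stokes system and invoke the Fredholm alternative. The small-Lipschitz-constant hypothesis activates the standard $L^q$ theory for the unperturbed Stokes problem on $\Om$ (Galdi, Brown--Shen, Kim): for every $F \in L^q(\Om)^{n\times n}$ there exists a unique pair $(w,\pi_0) \in W^{1,q}_{0,\si}(\Om)\times L^q_0(\Om)$ solving $-\De w + \nb\pi_0 = \div F$, $\div w = 0$, $w|_{\pd\Om}=0$, with $\|w\|_{W^{1,q}}+\|\pi_0\|_{L^q}\le C\|F\|_{L^q}$. Denote the velocity solution map by $\mathcal{S}_1$. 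Since $\div b=0$, the drift is in divergence form, $b\cdot\nb u = \div(b\otimes u)$, so \eqref{drifteq} becomes $(I+K)u = \mathcal{S}_1(\GG)$ with $K(v) := \mathcal{S}_1(b\otimes v)$, and the task reduces to inverting $I+K$ on $W^{1,q}_{0,\si}(\Om)$.

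In the range $1<q<n$, compactness of $K$ is direct: Sobolev embedding $W^{1,q}\hookrightarrow L^{q^*}$ with $q^*=nq/(n-q)$ and H\"older ($\tfrac1{q^*}+\tfrac1n=\tfrac1q$) give $\|b\otimes v\|_{L^q}\le C\|b\|_{L^n}\|v\|_{W^{1,q}}$; approximating $b$ by $b_M := b\,\chi_{\{|b|\le M\}}\in L^\infty$ realizes $v\mapsto b\otimes v$ as a norm limit of compact operators (Rellich--Kondrachov for $b_M$, with tail bounded by $C\|b-b_M\|_{L^n}\to 0$), and composition with the bounded $\mathcal{S}_1$ gives compactness of $K$. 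Injectivity of $I+K$ follows from the energy identity: for a homogeneous solution $u$ with $q\ge 2$, $u\in W^{1,2}_{0,\si}$ is admissible as a test function, and
\[
\int_\Om |\nb u|^2 + \int_\Om b\cdot\nb u\cdot u = 0,
\]
in which the drift integral equals $\int_\Om b\cdot\nb(|u|^2/2) = 0$ by $\div b=0$ and $u|_{\pd\Om}=0$, forcing $u\equiv 0$. For $1<q<2$ with $q>n'$, injectivity comes by duality against \eqref{drifteq_dual} at $q'>2$, $q'<n$, already handled. The Fredholm alternative then delivers existence, uniqueness, and \eqref{thm1.3-eq} in this range; the pressure is recovered by the Stokes theory with zero-mean normalization.

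The main obstacle is the extreme range $q\ge n$ (and dually $q\le n'$), where the simple Sobolev--H\"older bound for $v\mapsto b\otimes v$ from $W^{1,q}$ into $L^q$ breaks down. Here I would exploit the absolute continuity of the $L^n$ norm, which under $b\in L^n(\Om)$ yields $\sup_{x_0}\|b\|_{L^n(\Om\cap B_r(x_0))}\to 0$ as $r\to 0$, to localize on small balls where the drift becomes a genuine small perturbation of the pure Stokes operator. Patching local $W^{1,q}$ estimates via a partition of unity and commutator bounds then yields a global a priori estimate of the form $\|u\|_{W^{1,q}}\le C\|\GG\|_{L^q}+C\|u\|_{L^q}$, and the lower-order term is absorbed via compactness of $W^{1,q}\hookrightarrow L^q$ combined with uniqueness. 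This local-smallness reduction, rather than the Fredholm setup itself, is the technical heart of the proof in the range $q\ge n$.
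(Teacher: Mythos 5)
Your route is genuinely different from the paper's: you set up a Fredholm alternative (compactness of the drift operator composed with the Stokes solver, plus injectivity from the energy identity), whereas the paper proves a uniform a priori $W^{1,q}$ bound for the family $L_\lambda=-\Delta+\lambda b\cdot\nabla+\nabla\pi$ and then applies the method of continuity. For $n'<q<n$ your compactness/injectivity argument is sound and closes cleanly.

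However, there is a real gap in the extreme range $q\ge n$ (and, dually, $1<q\le n'$). You assert that the Sobolev--H\"older bound for $v\mapsto b\otimes v$ breaks down there and offer only a sketch of a localization-plus-commutator workaround. The claim is misleading: what breaks is the bound on $b\otimes v$ in $L^q$, but the bound that is actually needed for the perturbation argument, namely
\[
\|b\cdot\nabla u\|_{W^{-1,q}(\Omega)}\le C\,\|b\|_{L^n(\Omega)}\|u\|_{W^{1,q}(\Omega)},
\]
holds for \emph{all} $1<q<\infty$. This is exactly the paper's Lemma \ref{drift_est}: for $q>n'$ one estimates $\int|b||\nabla u||\phi|$ directly by H\"older with $b\in L^n$, $\nabla u\in L^q$, $\phi\in L^{nq'/(n-q')}$, using Sobolev on $\phi\in W^{1,q'}$ with $q'<n$; for $q<n$ one integrates by parts (using $\div b=0$) and estimates $\int|b||\nabla\phi||u|$ with Sobolev on $u\in L^{q^*}$. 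Since $n'<n$, the two cases cover all exponents. Approximating $b$ by smooth divergence-free $b_k$ then gives the refined form $\|b\cdot\nabla u\|_{W^{-1,q}}\le\varepsilon\|u\|_{W^{1,q}}+C_\varepsilon\|u\|_{L^q}$, which is precisely the compactness input your Fredholm setup needs --- it suffices to reformulate $K$ to act on $W^{-1,q}$ data, $K(v)=\mathcal{S}_1(b\cdot\nabla v)$ with $\mathcal{S}_1:W^{-1,q}(\Omega)^n\to W^{1,q}_{0,\sigma}(\Omega)$, rather than $L^q$ data. With that adjustment your argument works uniformly in $q$, and the localization/commutator detour (which is genuinely delicate for Stokes because of the pressure and the divergence-free constraint on cutoffs) becomes unnecessary. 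As written, though, the proposal is incomplete for $q\ge n$, and your injectivity argument for $1<q\le n'$ is circular since it relies on the dual problem at $q'\ge n$, which is precisely the unproved range.
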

We may simply assume $\Om$ is a bounded $C^1$ domain, as for
a bounded $C^1$ domain and arbitrary small constant $\lambda>0$, each point $x$ on  $\pd\Om$ has a nieghborhood $U_x$ such that $\pd \Om \cap U_x$ is the graph of a Lipschitz function with a Lipschitz constant less than $\lambda$.

{The boundary value problem of the perturbed Stokes system \eqref{drifteq} for critical drift $b\in L^3(\Om)$, $\Om \subset \R^3$, has previously been studied;
See Section 4 of Kim \cite{Kim09}, Theorem 6.1 in Dindoš and Mitrea \cite{DinMit}, Theorem 18 in Choe \& Kim \cite{ChoeKim}, and Section 5 of
Amrouche \& Rodríguez-Bellido \cite{AmRB}. They contain estimates similar to \eqref{thm1.3-eq}.
For more general results on scalar equations \eqref{scalareq-div} and \eqref{scalareq-conv} with drift}  $b \in L^n$, see
Kim \& Kim \cite{KK}.

\begin{theorem}\label{main2}
Let $1<q<\infty$ and $\Om$ be a bounded Lipschitz domain in $\R^n$, $n\ge 3$,
with sufficiently small Lipschitz constant $L>0$ $($i.e., $L \le L_0$ where $L_0=L_0(n,q)>0)$.
There is $\e(q,\Om)>0$ such that, if
$b\in L^{n,\infty}(\Omega)^n$, $\div b=0$, and $\norm{b}_{L^{n,\infty}(\Omega)} \le \e(q,\Om)$, then for any $\GG\in L^q(\Om)^{n\times n}$,
there exists a unique solution pair $u\in W^{1,q}_{0,\si}(\Omega)$ and $\pi\in L^q(\Omega)$ of \eqref{drifteq} such that $\int_\Om \pi\,dx=0$ and
\EQ{
\|u\|_{ W^{1,q}(\Omega)}+\|\pi\|_{L^q(\Omega)}\le C\|\Bbb G\|_{L^q(\Omega)}.
}
\end{theorem}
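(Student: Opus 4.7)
The plan is to treat the drift term $b\cdot\nb u=\div(b\otimes u)$ as a perturbation of the unperturbed Stokes system and to solve \eqref{drifteq} via a Banach fixed-point (Neumann-series) argument. The basic ingredient is the $W^{1,q}$-solvability of the Stokes system on a bounded Lipschitz domain with sufficiently small Lipschitz constant: for $1<q<\oo$ and $L\le L_0(n,q)$, there is a bounded linear solution operator $\mc{S}_q:L^q(\Om)^{n\times n}\to W^{1,q}_{0,\si}(\Om)\times L^q_0(\Om)$ producing the unique pair $(w,\si)$ that solves $-\De w+\nb\si=\div\FF$, $\div w=0$, $w|_{\pd\Om}=0$, with $\|w\|_{W^{1,q}}+\|\si\|_{L^q}\le C(q,\Om)\|\FF\|_{L^q}$. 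Under this framework \eqref{drifteq} becomes the fixed-point equation $(u,\pi)=\mc{S}_q(\GG-b\otimes u)$.

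For $1<q<n$, this map is contractive on $W^{1,q}_{0,\si}(\Om)$ provided $\|b\|_{L^{n,\oo}}$ is small. Indeed, Hölder's inequality in Lorentz spaces combined with the refined Sobolev embedding $W^{1,q}_0(\Om)\hookrightarrow L^{q^*,q}(\Om)$ (with $q^*=\tfrac{nq}{n-q}$) yields
\begin{equation*}
\|b\otimes u\|_{L^q(\Om)}\le C(n,q)\|b\|_{L^{n,\oo}(\Om)}\|u\|_{L^{q^*,q}(\Om)}\le C'(n,q,\Om)\|b\|_{L^{n,\oo}(\Om)}\|\nb u\|_{L^q(\Om)}.
\end{equation*}
Choosing $\ve(q,\Om)$ with $C(q,\Om)\,C'(n,q,\Om)\,\ve(q,\Om)<1$ turns the velocity component of $u\mapsto\mc{S}_q(\GG-b\otimes u)$ into a strict contraction. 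Banach's fixed-point theorem then produces a unique $u\in W^{1,q}_{0,\si}(\Om)$ with the associated pressure $\pi\in L^q_0(\Om)$ furnished by $\mc{S}_q$, and the stated norm bound follows. Uniqueness of the solution pair reduces to triviality of the homogeneous equation, which is precisely the same contraction applied to the difference of two solutions.

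For $q\ge n$ the quantity $b\otimes u$ need not lie in $L^q$, so the direct contraction breaks down. The remedy is duality: since $\div b=0$, the formal adjoint of $u\mapsto -\De u+\div(b\otimes u)$ is $v\mapsto -\De v-\div(b\otimes v)$, so the dual problem \eqref{drifteq_dual} is a perturbed Stokes problem of the same form, with $b$ replaced by $-b$ and identical $L^{n,\oo}$-norm. Since $q\ge n$ and $n\ge 3$ give $q'\le n'=\tfrac{n}{n-1}<n$, the previous step applies to the dual problem, producing a bounded solution operator in $W^{1,q'}_{0,\si}\times L^{q'}_0$. Riesz-type duality then delivers $u\in W^{1,q}_{0,\si}(\Om)$ solving \eqref{drifteq}, with pressure $\pi\in L^q_0(\Om)$ recovered from $\nb\pi\in W^{-1,q}$ via De Rham/Bogovskii for Lipschitz domains with small Lipschitz constant.

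The main obstacle is this duality step for $q\ge n$. One has to verify that the abstractly constructed $u$ actually belongs to $W^{1,q}_{0,\si}(\Om)$ and satisfies the distributional identity \eqref{soln-pair} with a well-defined pairing $\int(b\otimes u):\nb\zeta$ for $\zeta\in W^{1,q'}_0$ with $q'<n'$. Since $W^{1,q}_0\hookrightarrow L^\oo$ when $q>n$, one has $b\otimes u\in L^{n,\oo}$, so the pairing must be justified either through a Lorentz-Hölder estimate at the dual exponent or by approximating $b$ with bounded drifts and passing to the limit. The final threshold $\ve(q,\Om)$ is then the worse of the primal and dual Stokes-Sobolev-Hölder constants at the exponents $q$ and $q'$.
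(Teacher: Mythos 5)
Your contraction/Neumann-series approach is genuinely different from the paper's, which instead establishes a priori $W^{1,q}$ estimates (Lemma~\ref{apriori} for $q>2$, Lemma~\ref{secondapriori} for $q<2$ by duality) and then invokes the method of continuity, with the homotopy $L_\la = -\De + \la\, b\cdot\nb + \nb$ deforming from the pure Stokes operator. For the small-drift case your direct fixed-point argument is arguably cleaner, since it bypasses both the $L^2$ (Lax--Milgram) theory and the continuity method, which the paper keeps so that Proposition~\ref{main1} and Theorem~\ref{main2} can share one proof.

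However, your treatment of $q\ge n$ leaves a real gap, and a partly self-inflicted one. First, the split at $q=n$ is unnecessary: the relevant estimate for the contraction is not $\|b\otimes u\|_{L^q}\le C\|b\|_{L^{n,\oo}}\|\nb u\|_{L^q}$ (which indeed fails for $q\ge n$) but rather $\|b\cdot\nb u\|_{W^{-1,q}}\le C\|b\|_{L^{n,\oo}}\|u\|_{W^{1,q}}$, which holds for \emph{all} $1< q<\oo$ (this is exactly Lemma~\ref{drift_est_weak}). The point is that for $q\ge n$ one estimates the dual pairing $\int b\cdot\nb\phi\, u$ against $\phi\in W^{1,q'}_0$ with $q'<n$, and H\"older--Sobolev lands on $\phi$, not on $u$; there is no need to put $b\otimes u$ into $L^q$. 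If you run the contraction through the Stokes solution operator from $W^{-1,q}(\Om)$ into $W^{1,q}_{0,\si}\times L^q_0$ (still Theorem~\ref{th:GSS}) rather than from $L^q$ sources, the fixed-point argument closes for every $1<q<\oo$ and the duality step can be dropped entirely.

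Second, as written, the duality step for $q\ge n$ does not actually produce a solution. Riesz representation of the functional $\FF\mapsto\int\GG:\nb v$ on $L^{q'}$ gives a matrix field $H\in L^q$ with $\int H:\FF=\int\GG:\nb v$ for all $\FF$, but you have not shown that $H=\nb u$ for some $u\in W^{1,q}_{0,\si}$, nor that such a $u$ satisfies the weak formulation \eqref{soln-pair}. Duality naturally yields the a priori bound $\|\nb u\|_q\le C\|\GG\|_q$ for an \emph{already given} $q$-weak solution $u$ (this is how the paper's Lemma~\ref{secondapriori} works), but existence still requires an extra mechanism — approximation of $\GG$ by nicer data together with the a priori bound, or the method of continuity. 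You flag this obstacle yourself but do not close it; either route above would.
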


Theorem \ref{main2} is similar to {Proposition \ref{main1}} with the smallness directly assumed on the $L^{n,\infty}$-norm of $b$. Again, we may simply assume $\Om$ is a bounded $C^1$ domain.
Theorem \ref{main2} may hold for an arbitrary bounded Lipschitz domain in dimension 3, without assuming small Lipschitz constant, but only for $q$ close to 2, if one uses Dindo\v s-Mitrea \cite[Theorem 5.6]{DinMit} instead of Theorem \ref{th:GSS}. We do not pursue it here.

{A related result is Dong \& Phan \cite[Theorem 1.9]{DongPhan}, which is concerned with \emph{non-stationary} perturbed Stokes systems. For divergence free drift  %
sufficiently small in BMO$^{-1}$, which is also critical, they obtain an \emph{interior} gradient estimate. It does not include the boundary estimate due to difficulties with the time derivative.
In the time-independent case, the same method should work, and could be combined with the argument in Dong \& Kim \cite{DongKim} to give the boundary estimate.}
For corresponding results on scalar equations with drift coefficient $b \in L^{n,\infty}$, see
Kim \& Tsai \cite{KiTs}.

\medskip

Our third result, {which is the most interesting in this paper,} does not assume smallness of the drift $b$, but it only gives gradient estimates for exponent $p$ slightly larger than 2, with $p-2$ bounded above by a constant depending on the size of $b$ in $L^{n,\infty}(\Om)$. It is based on Caccioppoli inequality approach as in the classical work of Gehring \cite{Gehring} and Giaquinta-Modica \cite{GM79a,GM82}, which has been adapted by Kwon \cite{Kwon_1} for scalar equations with critical drifts.  However, to bound the pressure in \eqref{drifteq} at the presence of the critical drift term, we will use Wolf's local pressure projection \cite{Wolf15,Wolf17},
which is the key new ingredient compared to  \cite{Kwon_1}.
We do not need the Lipschitz constant $L$ to be small, but we do need $L<1/2$.

\begin{theorem}\label{main3}
Let $\Om$ be a bounded Lipschitz domain in $\R^n$, $n\ge 3$, with Lipschitz constant $0<L<1/2$.
Assume $b\in L^{n,\infty}(\Omega)^n$, $\div b=0$.
Then there exists $p_0=p_0(\Omega,\|b\|_{L^{n,\infty}(\Omega)})>2$ such that {for each} $q\in(p_0',p_0)$ and $\GG\in L^{q}(\Omega)^{n\times n}$,  there exists a unique solution pair $u\in W^{1,q}_{0,\si}(\Omega)$ and $\pi\in L^q(\Omega)$ of \eqref{drifteq} such that $\int_\Om \pi dx=0$. Moreover, we have
\EQ{
\|u\|_{ W^{1,q}(\Omega)}+\|\pi\|_{L^q(\Omega)}\le C\|\GG\|_{L^{q}(\Omega)},
}
with $C=C(\Omega,\|b\|_{L^{n,\infty}(\Omega)})$.
\end{theorem}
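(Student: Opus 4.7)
The plan is a Caccioppoli-plus-Gehring self-improvement around $p=2$, with Wolf's local pressure projection \cite{Wolf15,Wolf17} as the key new device for handling the pressure locally. This adapts to the Stokes system the scheme developed by Kwon \cite{Kwon_1} for scalar equations with critical drifts. The base case $q=2$ is handled directly: on $W^{1,2}_{0,\si}(\Om)$, the bilinear form $a(u,v) = \int \nb u:\nb v + \int (b\cdot\nb u)\cdot v$ is coercive, since integration by parts with $\div b=0$ gives $\int(b\cdot\nb u)\cdot u = \tfrac{1}{2}\int b\cdot\nb|u|^2 = 0$, and it is continuous by H\"older in Lorentz spaces together with the sharp Sobolev embedding $W^{1,2}_0\imbed L^{2^*,2}$, with continuity constant $\lec 1+\|b\|_{L^{n,\infty}(\Om)}$. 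Lax--Milgram supplies a unique $u\in W^{1,2}_{0,\si}(\Om)$, and de Rham recovers $\pi\in L^2_0(\Om)$ with $\|u\|_{W^{1,2}}+\|\pi\|_{L^2}\lec \|\GG\|_{L^2}$.

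The heart of the argument is an interior Caccioppoli-type inequality on concentric balls $B_R\subset B_{2R}\subset\Om$. To handle the pressure without a global estimate, apply Wolf's local pressure projection on $B_{2R}$ to split $\pi|_{B_{2R}} = \pi_\GG + \pi_b + \pi_h$, where $\pi_\GG$ and $\pi_b$ solve Dirichlet Poisson problems driven respectively by $\GG$ and $b\otimes u$ (bounded via Calder\'on--Zygmund by $\|\GG\|_{L^2(B_{2R})}$ and $\|b\ot u\|_{L^2(B_{2R})}$), and $\pi_h$ is harmonic. Testing the weak form with $\zeta = \eta^2(u-u_{B_{2R}})$ corrected by a Bogovski\u{\i} field to be divergence-free, the structural identity (from $\div b=0$)
\Eq{
\int (b\cdot\nb u)\cdot\eta^2(u-u_{B_{2R}}) = -\int \eta(\nb\eta)\cdot b\,|u-u_{B_{2R}}|^2,
}
combined with Lorentz--H\"older and Sobolev--Poincar\'e, bounds the drift contribution by $C\|b\|_{L^{n,\infty}(B_{2R})}\|\nb u\|_{L^2(B_{2R})}^2$. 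A Lorentz-refined annular absorption (Giaquinta's iteration) then yields
\Eq{
\int_{B_R} |\nb u|^2 \lec \int_{B_{2R}} \Big|\tfrac{u-u_{B_{2R}}}{R}\Big|^2 + \int_{B_{2R}} |\GG|^2,
}
with constant $C(\|b\|_{L^{n,\infty}(\Om)})$.

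Sobolev--Poincar\'e converts this into the interior reverse H\"older inequality
\Eq{
\bke{\tfrac{1}{|B_R|}\int_{B_R}|\nb u|^2}^{1/2} \le C\bke{\tfrac{1}{|B_{2R}|}\int_{B_{2R}}|\nb u|^{2_*}}^{1/2_*} + C\bke{\tfrac{1}{|B_{2R}|}\int_{B_{2R}}|\GG|^2}^{1/2},\qquad 2_*=\tfrac{2n}{n+2}.
}
Near $\pd\Om$, bi-Lipschitz flattening (for which $L<1/2$ suffices to control the Jacobian change uniformly) and using $\eta^2 u$ in place of $\eta^2(u-u_{B_{2R}})$ (since $u=0$ on $\pd\Om$) give the analogous inequality on $B_{2R}\cap\Om$. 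Gehring's lemma on Lipschitz domains then produces $p_0 = p_0(\Om,\|b\|_{L^{n,\infty}(\Om)})>2$ with $\|\nb u\|_{L^p(\Om)}\lec \|\GG\|_{L^p(\Om)}$ for all $p\in[2,p_0]$; a corresponding $L^p$ pressure bound follows by reassembling the Wolf pieces globally. Since $\Om$ is bounded, the Step~1 solution exists for any $\GG\in L^p(\Om)$, so the $L^p$ a priori estimate delivers existence and uniqueness for $q\in[2,p_0)$. For $q\in(p_0',2)$, the adjoint system \eqref{drifteq_dual} coincides with \eqref{drifteq} under $\div b=0$; pairing against an adjoint solution with data in $L^{q'}\subset L^{p_0}$ gives the solution and its estimate by duality, and uniqueness on all of $(p_0',p_0)$ follows the same way.

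The main obstacle is the Caccioppoli step itself: the critical drift enjoys no smallness and the pressure is an unknown that cannot be controlled by classical means inside a cutoff argument, so neither simplification from the scalar case of \cite{Kwon_1} carries over. Wolf's local pressure projection is precisely what overcomes this, splitting $\pi$ into pieces slave to concrete local data plus a harmless harmonic remainder. The divergence-free hypothesis $\div b=0$ then converts the drift into the commutator above, which Lorentz--H\"older can tame without smallness. The resulting Gehring gain $p_0-2$ is positive but depends quantitatively on $\|b\|_{L^{n,\infty}(\Om)}$ and may be arbitrarily small as this norm grows, matching the dependence stated in Theorem \ref{main3}.
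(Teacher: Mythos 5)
Your overall scheme---Wolf's local pressure projection, a Caccioppoli/reverse-H\"older inequality, Gehring--Giaquinta self-improvement, the $L^2$ base case by Lax--Milgram, and duality for $q<2$---is exactly the paper's, but your central Caccioppoli step has an internal inconsistency. You decompose $\pi$ locally into a Poisson piece driven by $\GG$, a Poisson piece driven by $b\otimes u$, and a harmonic remainder $\pi_h$, and then propose to test with a \emph{divergence-free} field obtained by correcting $\eta^2(u-u_{B_{2R}})$ with a Bogovskii field. If the test field is divergence-free, the pressure pairs to zero and your decomposition of $\pi$ does no work in the Caccioppoli estimate; if instead you want the pressure decomposition to matter, you must keep the test field with nonzero divergence. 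The paper resolves this by testing directly with $\eta^4\bar u$ (no divergence-free correction), so the surviving term $\int\pi\,\nb(\eta^4)\cdot\bar u$ must be controlled, and it does so with the decomposition $\pi-(\pi)_{\Om_0}=\pi_1+\pi_2+\pi_3$ where $\nb\pi_1=\cW_{2,\Om_0}(\De\bar u)$, $\nb\pi_2=\cW_{2,\Om_0}(-\div(b\ot\bar u))$, $\nb\pi_3=\cW_{2,\Om_0}(\div\GG)$. Note this is not your decomposition: all three pieces come from the Stokes projection $\cW_{2,\Om_0}$, not a scalar Poisson operator, and there is no harmonic remainder to worry about---each $\pi_i$ is bounded in $L^2(\Om_0)$ by $\|\nb\bar u\|_{L^2}$, $\|b\ot\bar u\|_{L^2}$, $\|\GG\|_{L^2}$ respectively, with constants depending only on $n$ and the Lipschitz constant $L$ (Theorem~\ref{theoremL2wolf}, via the quantitative Bogovskii map of Lemma~\ref{Bogmap}). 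Your harmonic remainder $\pi_h$ is never estimated in the proposal; without a bound for it, the Caccioppoli inequality does not close.

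A second divergence is at the boundary. You suggest bi-Lipschitz flattening, but flattening does not preserve the divergence-free constraint, which is precisely the extra difficulty of the Stokes system over the scalar case. The paper never flattens: for $L<1/2$ the cap $\Om_{x_0,\rho}=\Om\cap B_\rho(x_0)$ is shown to be star-shaped with respect to a ball of radius comparable to $\rho$, which gives a Bogovskii map and hence a Wolf projection bound with constant depending only on $n$ and $L$; then $u,b,\GG$ and the $\pi_i$ are extended by zero to $B_\rho(x_0)$ and one tests directly with $\eta^4 u$, admissible since $u|_{\pd\Om}=0$. The rest of your argument---Lorentz--H\"older control of the drift using $\div b=0$, absorption via Giaquinta's iteration lemma, Sobolev--Poincar\'e down to the exponent $2n/(n+2)$, Gehring to reach $p_0>2$, and duality for $q\in(p_0',2)$---matches the paper.
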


Similar to scalar equations \eqref{scalareq-div} and
\eqref{scalareq-conv}, the key in the proof of {Theorem \ref{main3}} is to establish an a priori bound. The new difficulty for the perturbed Stokes system \eqref{drifteq} is the pressure term $\nb\pi$, which cannot be eliminated or hidden as $b \cdot \nb u$, when we test it with a test function of the form $f(|u|)u \zeta$ where $f$ is a Lipschitz continuous function and $\zeta$ is a smooth cut-off function. This has been the main obstacle for us to apply the methods of De Giorgi and Moser which were successfully used in \cite{KiTs,KPT}. However, we are able to deal with the pressure term in the approach of Gehring \cite{Gehring}, Giaquinta-Modica  \cite{GM79a,GM82} and Kwon \cite{Kwon_1}, using the new ingredient of the local pressure projection of Wolf \cite{Wolf15,Wolf17}. This results in Theorem \ref{main3}.

By a similar proof as Theorem \ref{main3}, we can obtain the following proposition for the \emph{scalar equations} \eqref{scalareq-div} and
\eqref{scalareq-conv}. {Instead of divergence free, we assume $\div b\ge0$.}

\begin{proposition}[Scalar equations]\label{main4}
Let $\Om$ be a bounded Lipschitz domain in $\R^n$, $n\ge 3$.
Assume $b\in L^{n,\infty}(\Omega)^n$, $\div b\ge 0$.
Then there exists $p_0=p_0(\Omega,\|b\|_{L^{n,\infty}(\Omega)})>2$ such that

\textup{(a)} For any $q\in[2,p_0]$ and $G\in L^{q}(\Omega)^{n}$,  there exists a unique solution $v\in W^{1,q}_{0}(\Omega)$ of \eqref{scalareq-conv}. Moreover, we have
\EQ{\label{v.est}
\|v\|_{ W^{1,q}(\Omega)}\le C\|G\|_{L^{q}(\Omega)}.
}

\textup{(b)} For any $q\in[p_0',2)$ and $F\in L^{q}(\Omega)^{n}$,  there exists a unique solution $u\in W^{1,q}_{0}(\Omega)$ of \eqref{scalareq-div}. Moreover, we have
\EQ{\label{u.est}
\|u\|_{ W^{1,q}(\Omega)}\le C\|F\|_{L^{q}(\Omega)}.
}
\end{proposition}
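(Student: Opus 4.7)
The plan is to follow the proof of Theorem \ref{main3} with significant simplifications: since the scalar equations carry no pressure, Wolf's local pressure projection will not be needed, and one will test the equation directly against cutoff multiples of the solution itself. Part (a) will be obtained by a Caccioppoli-plus-Gehring scheme adapted from \cite{Kwon_1}, and part (b) will then follow from part (a) by duality.

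For the baseline $W^{1,2}_0$ theory underlying part (a), I will verify that the bilinear form $a(v,\varphi) = \int_\Omega \nabla v \cdot \nabla\varphi - \int_\Omega (b\cdot\nabla v)\varphi$ is continuous on $W^{1,2}_0(\Omega) \times W^{1,2}_0(\Omega)$ by the Sobolev-Lorentz embedding $W^{1,2}_0 \hookrightarrow L^{2^*,2}$ together with Hölder-Lorentz for $b \in L^{n,\infty}$. The hypothesis $\div b \ge 0$ then yields the identity $\int (b\cdot\nabla v) v = -\tfrac12 \langle \div b, v^2\rangle \le 0$, so uniqueness for the conv-form operator $L_c = -\Delta - b\cdot\nabla$ follows from the energy identity; Fredholm's alternative then gives existence in $W^{1,2}_0$, with $\|v\|_{W^{1,2}}\le C\|G\|_{L^2}$. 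The symmetric argument handles the divergence form $L_d u = -\Delta u + \nabla\cdot(ub)$.

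For higher integrability, I will test \eqref{scalareq-conv} with $(v - \bar v_R)\zeta^2$ on a ball $B_R\subset\Omega$, where $\bar v_R = \fint_{B_R} v$ and $\zeta$ is a standard cutoff with $\zeta\equiv 1$ on $B_{R/2}$ and $|\nabla\zeta|\le C/R$. Writing $(b\cdot\nabla v)(v-\bar v_R) = \tfrac12 b\cdot\nabla((v-\bar v_R)^2)$, integrating by parts, and discarding the nonnegative term $\tfrac12\int \div b \cdot (v-\bar v_R)^2 \zeta^2$ reduces the drift contribution to the annular term $\int (v-\bar v_R)^2 \zeta\, b\cdot\nabla\zeta$ supported in $B_R\setminus B_{R/2}$, which I will estimate by Hölder-Lorentz (exploiting $|b|\in L^{n,\infty}$ and $(v-\bar v_R)^2\in L^{n/(n-1),1}$) together with the Sobolev-Lorentz-Poincaré inequality, obtaining a bound of the form $C\|b\|_{L^{n,\infty}(\Omega)}\int_{B_R}|\nabla v|^2$. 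Combining with the standard Sobolev-Poincaré bound $R^{-2}\fint_{B_R}|v - \bar v_R|^2 \le C (\fint_{B_R}|\nabla v|^{2_*})^{2/2_*}$ with $2_* = 2n/(n+2)$ produces a reverse-Hölder-type inequality for $|\nabla v|$ with an extra drift-dependent term on the right-hand side. Boundary balls are treated by zero-extending $v$ across $\partial\Omega$. Applying Gehring's lemma in the form used in \cite{Kwon_1}, which accommodates the extra drift term, then produces $p_0 = p_0(\Omega, \|b\|_{L^{n,\infty}}) > 2$ so that \eqref{v.est} holds for $q \in [2, p_0]$.

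Finally, part (b) will be deduced from part (a) by duality: since $L_d$ is the formal adjoint of $L_c$ (via $\nabla\cdot(ub) = b\cdot\nabla u + u\div b$), the isomorphism $L_c: W^{1,q'}_0 \to W^{-1,q'}$ for $q'\in(2,p_0]$ from part (a) transposes to an isomorphism $L_d: W^{1,q}_0 \to W^{-1,q}$ for $q\in[p_0', 2)$, giving the existence and uniqueness of $u$ with the bound \eqref{u.est}. The main obstacle throughout will be that $\|b\|_{L^{n,\infty}(B_R)}$ does not shrink as $R\to 0$, so the drift contribution in the Caccioppoli step cannot be absorbed in the naive way; as in \cite{Kwon_1}, it must be handled within the Gehring iteration, which enlarges the Gehring constant and thereby shrinks the gap $p_0 - 2$ as a function of $\|b\|_{L^{n,\infty}(\Omega)}$.
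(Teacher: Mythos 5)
Your overall plan matches the paper's: part (a) via a Caccioppoli--Gehring argument adapted from Kwon \cite{Kwon_1}, part (b) by duality, and the whole thing is a simplification of the vector proof of Theorem~\ref{main3} in which Wolf's pressure projection becomes unnecessary. That is exactly what the paper does in Section~7. Two points, however, deserve attention.

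\medskip

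\emph{The Caccioppoli step.} You propose to bound the annular drift term directly by $C\|b\|_{L^{n,\infty}(\Omega)}\int_{B_R}|\nabla v|^2$ and then feed this into ``a version of Gehring which accommodates the extra drift term.'' As written, this cannot work: after Sobolev--Poincar\'e you lose the annulus localization, so the bound lives on the full ball $B_R$, and a term of the form $\theta\fint_{B_{2r}}|\nabla v|^2$ with $\theta = C\|b\|_{L^{n,\infty}}$ possibly $\ge 1$ is not admissible in any Gehring/Giaquinta-type lemma — Proposition~1.1 of \cite[p.~122]{Gia} explicitly requires $\theta<1$, and the inequality is vacuous otherwise. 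The correct move, which the paper makes (compare the proof of Theorem~\ref{Capriori}), is to split the drift term by Young's inequality into a piece with a small coefficient multiplying $\|\nabla v\|_{L^2(B_\rho)}^2$ (for all intermediate radii $r<\rho$) plus a piece with the large, $\|b\|$-dependent coefficient multiplying the \emph{lower-order} quantity $(\rho-r)^{-2}\|v-\bar v_\rho\|_{L^2(B_\rho)}^2$. The small-coefficient gradient term is then removed by the iteration lemma (Giaquinta, Lemma~3.1, \cite[p.~161]{Gia}), after which Sobolev--Poincar\'e converts the lower-order term into $c(b)\bigl(\fint |\nabla v|^{2n/(n+2)}\bigr)^{(n+2)/n}$, i.e.\ the Gehring ``$b$''-coefficient. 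It is this coefficient, not a $\theta\fint|\nabla v|^2$ term, that absorbs the largeness of $\|b\|_{L^{n,\infty}}$ and shrinks the gap $p_0-2$. Your last paragraph suggests you are aware of this phenomenon at a heuristic level, but the estimate you describe does not implement it; you should make the Young split and the two-stage absorption explicit, otherwise the reverse H\"older never comes out.

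\medskip

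\emph{The $W^{1,2}$ baseline.} You already prove $a(v,v)\ge \|\nabla v\|_{L^2}^2$ using $\div b\ge 0$, so the bilinear form is coercive and Lax--Milgram applies directly; the paper's Lemma~\ref{scalar-weak-solution} does exactly this. Invoking Fredholm's alternative on top of that is both unnecessary and not obviously available: for $b\in L^{n,\infty}$ the drift perturbation $v\mapsto b\cdot\nabla v$ is only bounded, not compact, from $W^{1,2}_0$ into $W^{-1,2}$, so the Fredholm machinery would need a separate justification. Drop it in favour of Lax--Milgram.

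\medskip

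Your part (b) via transposing the isomorphism $L_c:W^{1,q'}_0\to W^{-1,q'}$ is a clean and valid alternative to the paper's two-step (a priori bound by duality, then existence by approximation), provided part (a) genuinely furnishes surjectivity on the full range $q'\in(2,p_0]$ — which it does once the $L^2$ solution is shown, via the a priori estimate, to lie in $W^{1,q'}$.
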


Part (a) is an optimized version for $n \ge 3$ of Kwon \cite[Theorem 1.1]{Kwon_1}, which states for $n \ge2$ and $q>2$, the existence of $0<\e<q-2$ and solution $v$ such that $\|v\|_{ W^{1,2+\e}(\Omega)}\le C\|G\|_{L^{q}(\Omega)}$.
The main case of the results of \cite{Kwon_1} is when $n=2$, which is
very subtle and hard to improve.

On the other hand, Proposition \ref{main4} is a weaker result than Kim-Tsai \cite[Theorem 2.1]{KiTs}, which is why we only call it a proposition:
In Proposition \ref{main4}, $p_0$ is only slightly larger than  $2$, but $p_0=n$
in \cite[Theorem 2.1]{KiTs}. The only advantage of Proposition \ref{main4} is that
$\Om$ does not need be $C^1$, and $\div b$ need not be in $L^{n/2,\infty}(\Om)$.
The proofs are different:
The first step in the proof  of \cite[Theorem 2.1]{KiTs} is a priori estimate in $L^{p^*}$, $2<p<n$, for solution $u$ of \eqref{scalareq-div} with $F \in L^p$ using De Giorgi - Moser type method, from which one gets $W^{1, (p,\infty)}$ estimates by Calderon-Zygmund estimates in Lorentz spaces.
In contrast, the first step in the proof of Proposition \ref{main4} is a reversed H\"older inequality for solution $v$ of \eqref{scalareq-conv} in $L^2$, shown in Kwon \cite{Kwon_1}.

\medskip

The rest of the paper is organized as follows. In Section \ref{sec2} we give a few preliminary results, including H\"older and Sobolev inequalities in Lorentz spaces, drift term estimates, and $L^q$ estimates of the Stokes system. In Section \ref{sec3} we first prove a priori $W^{1,q}$ estimates, and then prove {Proposition \ref{main1} and Theorem \ref{main2}}.
In Section \ref{sec4} we use an explicit Bogovskii's map to prove uniform $L^2$-estimates of Wolf's local pressure projections on our regions.
In Section \ref{sec5} we prove a priori $W^{1,q}$ estimates under the assumptions of Theorem \ref{main3}.
In Section \ref{sec6} we prove Theorem \ref{main3}.
In Section \ref{sec7} we prove Proposition \ref{main4}.
\section{Preliminaries}\label{sec2}
In this section we give a few preliminary results. For a bounded open set $E\subset \R^n$, we denote
\[
(f)_E  = \frac 1{|E|} \int_E f\,dx = \fint_E f\,dx,
\]
\[
\Om_r(x_0)=\Om_{x_0,r} = \Om \cap B_r(x_0).
\]
We sometimes abbreviate  $\norm{f}_{L^p(E)}$ as $\norm{f}_{p,E}$ for the $L^p$-norm in $E$.
As $\Om$ is bounded, we can use the following norm for $W^{1,-q}(\Om)^m$, $1<q<\infty$, $m\ge1$,
\EQ{
\norm{f}_{W^{1,-q}(\Om)^m} = \sup _{u \in W^{1,q'}_0(\Om)^m,\,\norm{\nb u}_{L^{q'}}(\Om)\le1}
\bka{f, u}.
}

We now recall H\"older and
Sobolev inequalities in Lorentz spaces.
The following is the H\"older inequality in Lorentz spaces, essentially due
to R. O’Neil \cite{ON}. See \cite[Lemma 3.1]{KPT} for the cases $p \le 1$ or $q< 1$.

\begin{lemma}[H\"older inequality in Lorentz spaces]\label{Holder}
Let $\Om$ be any domain in $\R^n$.
Let $0< p,p_1,p_2< \infty$ and $0< q,q_1,q_2\le\infty$ satisfy
$$
\frac{1}{p}=\frac{1}{p_1}+\frac{1}{p_2}\quad\text{and}\quad \frac{1}{q} \le  \frac{1}{q_1}+\frac{1}{q_2}.
$$
Then there is a constant $C=C(  p_1 , p_2 ,   q_1 , q_2 ,q )>0$ such that
$$
\|fg\|_{p,q} \le C  \|f\|_{p_1,q_1} \|g\|_{p_2,q_2}
$$
for all $f \in L^{p_1,q_1}(\Omega ) $ and $g \in L^{p_2,q_2}(\Omega )$.
\end{lemma}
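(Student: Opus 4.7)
The plan is to reduce the inequality to decreasing rearrangements and then invoke a one-variable H\"older inequality on the half-line with the measure $\frac{dt}{t}$. Recall that for any measurable $h$ on $\Om$ the decreasing rearrangement is $h^*(t)=\inf\{s>0:|\{|h|>s\}|\le t\}$, and for $0<p<\infty$, $0<q\le\infty$ one has the equivalent representation
\[
\|h\|_{p,q}\sim \left(\int_0^\infty \bigl(t^{1/p}h^*(t)\bigr)^q\,\frac{dt}{t}\right)^{1/q}
\]
(with the usual supremum when $q=\infty$). The pointwise ingredient I need is the rearrangement bound
\[
(fg)^*(t)\le f^*(t/2)\,g^*(t/2),
\]
which follows from the elementary inclusion $\{|fg|>\lambda\mu\}\subseteq\{|f|>\lambda\}\cup\{|g|>\mu\}$ applied with $\lambda=f^*(t/2)$, $\mu=g^*(t/2)$.

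First I would treat the endpoint case $\frac{1}{\tilde q}=\frac{1}{q_1}+\frac{1}{q_2}$, so that $\tilde q\le q$. Plugging the rearrangement bound into the integral representation of $\|fg\|_{p,\tilde q}$, performing the change of variable $s=t/2$, and using $s^{1/p}=s^{1/p_1}s^{1/p_2}$, I get
\[
\|fg\|_{p,\tilde q}^{\tilde q}\le C\int_0^\infty \bigl[s^{1/p_1}f^*(s)\bigr]^{\tilde q}\bigl[s^{1/p_2}g^*(s)\bigr]^{\tilde q}\,\frac{ds}{s}.
\]
Since $q_1/\tilde q$ and $q_2/\tilde q$ are conjugate exponents, H\"older's inequality on $(0,\infty)$ with respect to $\frac{ds}{s}$ yields $\|fg\|_{p,\tilde q}\le C\|f\|_{p_1,q_1}\|g\|_{p_2,q_2}$. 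Cases with $q_1$ or $q_2$ equal to $\infty$ are handled by first pulling that factor out as $\sup_s s^{1/p_i}f^*(s)=\|f\|_{p_i,\infty}$ and then applying the one-dimensional H\"older to what remains.

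To pass from $\tilde q$ to a general $q$ with $\frac{1}{q}\le\frac{1}{\tilde q}$ (i.e.\ $q\ge\tilde q$), I would invoke the standard embedding $L^{p,\tilde q}(\Om)\hookrightarrow L^{p,q}(\Om)$, valid for $0<p<\infty$ and $0<\tilde q\le q\le\infty$. This embedding comes in one line from the integral representation: since $h^*$ is non-increasing, integrating $h^*(t)\ge h^*(t_0)$ over $(0,t_0)$ against $t^{\tilde q/p}\frac{dt}{t}$ gives the pointwise control $t_0^{1/p}h^*(t_0)\le (\tilde q/p)^{1/\tilde q}\|h\|_{p,\tilde q}$, and then the factorization $(t^{1/p}h^*(t))^q=(t^{1/p}h^*(t))^{q-\tilde q}(t^{1/p}h^*(t))^{\tilde q}$ produces $\|h\|_{p,q}\le C\|h\|_{p,\tilde q}$. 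Composing this embedding with the endpoint inequality gives the stated bound.

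The main obstacle is not any single deep step but the bookkeeping. One has to treat separately the boundary cases $q_i=\infty$ and the quasi-norm range $p\le 1$ or $q<1$, where the triangle inequality fails and the equivalent representation above must be adjusted (for instance by using $h^{**}$ in place of $h^*$, or by direct quasi-norm manipulations as in \cite[Lemma~3.1]{KPT}). Apart from these technicalities, the substantive content of the inequality is entirely contained in the rearrangement estimate $(fg)^*(t)\le f^*(t/2)g^*(t/2)$ combined with the one-dimensional H\"older inequality on $L^{\tilde q}((0,\infty),\frac{ds}{s})$.
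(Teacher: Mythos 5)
The paper offers no proof of Lemma~\ref{Holder}; it cites O'Neil \cite{ON} for the classical range of exponents and refers to \cite[Lemma 3.1]{KPT} for the quasi-norm cases $p\le 1$ or $q<1$. Your argument is a correct, self-contained derivation along the standard route: the rearrangement estimate $(fg)^*(t)\le f^*(t/2)\,g^*(t/2)$, the substitution $s=t/2$ together with $s^{1/p}=s^{1/p_1}s^{1/p_2}$, a one-variable H\"older inequality on $L^{q_i/\tilde q}\bigl((0,\infty),\tfrac{ds}{s}\bigr)$ in the endpoint case $\tfrac{1}{\tilde q}=\tfrac{1}{q_1}+\tfrac{1}{q_2}$ (the exponents $q_1/\tilde q$, $q_2/\tilde q$ are genuine dual exponents $\ge 1$ precisely because $\tilde q\le\min(q_1,q_2)$), and then the nesting $L^{p,\tilde q}\hookrightarrow L^{p,q}$ for $\tilde q\le q$. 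The $q_i=\infty$ endpoints are handled correctly by extracting the weak norm. This matches the argument the paper delegates to its references, so there is no genuine divergence in method, only in whether the proof is written out.

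One remark: your closing paragraph treats the quasi-norm range $p\le 1$ or $q<1$ as if it called for a separate treatment, but the proof you have already written covers it uniformly. Every step uses only the $h^*$-based quasi-norm and is valid for all $0<p<\infty$, $0<q\le\infty$: the rearrangement bound needs no hypothesis on the exponents, the one-dimensional H\"older is applied with exponents $\ge 1$, and the embedding follows from $t_0^{1/p}h^*(t_0)\le(\tilde q/p)^{1/\tilde q}\|h\|_{p,\tilde q}$ combined with the factorization $(t^{1/p}h^*(t))^q=(t^{1/p}h^*(t))^{q-\tilde q}(t^{1/p}h^*(t))^{\tilde q}$, again with no restriction. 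In fact, the alternative you float, replacing $h^*$ by $h^{**}$, would make things worse here, since the $h^{**}$-based functional is equivalent to the Lorentz quasi-norm only for $p>1$.
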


The following is the Sobolev inequality in Lorentz spaces. It follows from the
Sobolev inequality in Lorentz spaces in the whole $\R^n$ (see \cite[Remark 7.29]{AdamsFournier} and \cite{PO}), and the extension theorem from $W^{1,q} (\Om )$ to $W^{1,q} (\R^n)$  (see \cite[Theorem 5.28]{AdamsFournier}).

\begin{lemma}[Sobolev inequality in Lorentz spaces]\label{Sobolev}
Let $\Om$ be a bounded Lipschitz domain in $\R^n$, $n\ge 2$.
For $1< q<n$, there is a constant $C=C(n,q,\Om)>0$ such that
\[
\|u \|_{L^{q^* , q}(\Om )} \le C  \| u \|_{W^{1,q}(\Om)},\quad \forall u \in W^{1,q} (\Om ).
\]
Moreover, if $u$ satisfies zero boundary condition then $C$ is independent of $\Omega$.
\end{lemma}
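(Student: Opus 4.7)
The plan is to reduce to the analogous statement on all of $\R^n$ via the Lipschitz extension theorem. I would carry it out in three steps.

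First, I would establish the whole-space version: for $u \in W^{1,q}(\R^n)$ with $1<q<n$,
\[
\|u\|_{L^{q^*,q}(\R^n)} \le C(n,q) \|\nabla u\|_{L^q(\R^n)}.
\]
The route is to start from the classical pointwise bound $|u(x)| \le C_n (I_1 |\nabla u|)(x)$, where $I_1$ denotes the Riesz potential of order $1$, and then invoke the Lorentz-space Hardy--Littlewood--Sobolev inequality of O'Neil--Peetre, which asserts that $I_1 : L^{q,r}(\R^n) \to L^{q^*,r}(\R^n)$ is bounded for every $1<q<n$ and $1 \le r \le \infty$. Setting $r=q$ and using $L^q = L^{q,q}$ yields the displayed inequality. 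This is exactly the content of Adams--Fournier, Remark 7.29, so no independent work is needed here beyond the citation.

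Second, I would apply the Stein--Calder\'on extension theorem for bounded Lipschitz domains (Adams--Fournier, Theorem 5.28): there exists a bounded linear operator $E : W^{1,q}(\Om) \to W^{1,q}(\R^n)$ with $(Eu)|_{\Om} = u$ and
\[
\|Eu\|_{W^{1,q}(\R^n)} \le C(n,q,\Om)\, \|u\|_{W^{1,q}(\Om)}.
\]
Finally, I would combine the two, using the elementary fact that restriction decreases Lorentz norms (since the distribution function of $u|_\Om$ is dominated pointwise by that of $Eu$, hence the decreasing rearrangement is too):
\[
\|u\|_{L^{q^*,q}(\Om)} \le \|Eu\|_{L^{q^*,q}(\R^n)} \le C\|\nabla (Eu)\|_{L^q(\R^n)} \le C\|u\|_{W^{1,q}(\Om)}.
\]

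The only genuine analytic input is the whole-space Sobolev--Lorentz inequality, equivalently the boundedness of $I_1$ on Lorentz scales, which is where the main obstacle would lie if one were to prove it from scratch via O'Neil's convolution inequality or real interpolation. However, since it is available as a textbook result, the overall proof is essentially a composition of two cited theorems, and no further work beyond referencing them and verifying the restriction step is required.
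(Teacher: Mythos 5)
Your proof is correct and follows exactly the same route as the paper: the whole-space Sobolev--Lorentz inequality (Adams--Fournier, Remark 7.29) combined with the Stein extension theorem for Lipschitz domains (Adams--Fournier, Theorem 5.28). The paper gives only this one-line reduction without proof, so your slightly more detailed account (via the Riesz-potential mechanism and the monotonicity of Lorentz norms under restriction) is a faithful elaboration of the same argument.
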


This lemma does not include $q=1$ as its proof uses the extension theorem \cite[Theorem 5.28]{AdamsFournier} which is based on the Calderon-Zygmund theory. We can include $q=1$ if $\Om$ is $C^1$.
The combination of the above 2 lemmas shows $b u \in L^q(\Om)$, $1\le q<n$, if $b \in L^{n,\infty}$ and $u \in W^{1,q}(\Om)$,
\EQ{\label{eq2.1}
\norm{bu}_q \le C \norm{b}_{n,\infty} \norm{u}_{q^*, q} \le C  \norm{b}_{n,\infty} \| u \|_{W^{1,q}(\Om)},
}
and allows us to take test fields $ \zeta\in W^{1,q'}_0$
in \eqref{soln-pair} and \eqref{weak-soln}.

In the following two lemmas the drift term $b\cdot\nabla u$ is estimated in $W^{-1,q}(\Omega)$. These estimates are classical and can be obtained, e.g., from \cite[Lemma 3.5]{KPT} in which $\div b\neq 0$. We give a streamlined proof under our simpler setting with $\div b=0$.%

\begin{lemma}\label{drift_est}
Let $\Om$ be a bounded Lipschitz domain in $\R^n$, $n\ge 3$. For any
$b\in L^n(\Omega)$, $\div b=0$, and $u\in W^{1,q}(\Omega)$, $1\le q<\oo$, we have
\EQ{\label{eq2.4}
b\cdot\nabla u\in W^{-1,q}(\Omega)~\text{and}~\|b\cdot \nabla u\|_{W^{-1,q}(\Omega)}\le C_q\|b\|_{n,\Omega}\|u\|_{W^{1,q}(\Omega)}.
}
Moreover, for each $\varepsilon>0$ there exists $C=C(\varepsilon,p,b,\Om)$ such that
\EQ{\label{eq2.5}
\|b\cdot\nabla u\|_{W^{-1,q}(\Omega)}\le \varepsilon \|u\|_{1,q;\Omega}+C \|u\|_{q,\Omega}
}
\end{lemma}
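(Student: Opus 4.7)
The central tool is the integration-by-parts identity afforded by $\div b=0$: for $u\in W^{1,q}(\Om)$ and $\zeta\in W^{1,q'}_0(\Om)$,
\Eq{
\int_\Om (b\cdot\nb u)\zeta\,dx \;=\; -\int_\Om u\,(b\cdot\nb\zeta)\,dx.
}
By duality, establishing \eqref{eq2.4} reduces to bounding one of these integrals by $C\|b\|_{L^n}\|u\|_{W^{1,q}}\|\nb\zeta\|_{L^{q'}}$. The plan is to handle two complementary ranges of $q$, transferring the Sobolev gain between $u$ and $\zeta$.

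\emph{Step 1 (range $1\le q<n$).} Starting from $-\int_\Om u(b\cdot\nb\zeta)$, apply Lorentz H\"older (Lemma \ref{Holder}) with exponents $(q^*,n,q')$ and fine indices $(q,\infty,q')$, noting $L^n\subset L^{n,\infty}$, and then Lorentz Sobolev (Lemma \ref{Sobolev}) giving $\|u\|_{L^{q^*,q}}\le C\|u\|_{W^{1,q}}$, to obtain the bound. This is the step that actually uses $\div b=0$.

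\emph{Step 2 (range $n'<q<\infty$).} Here $q'<n$, so Sobolev yields $\|\zeta\|_{L^{(q')^*}}\le C\|\nb\zeta\|_{L^{q'}}$ with $(q')^*=nq'/(n-q')$. Estimate $\int(b\cdot\nb u)\zeta$ directly by three-factor H\"older with exponents $(q,n,(q')^*)$, whose reciprocals sum to $1$; this step does not require $\div b=0$. For $n\ge 3$ we have $n'<n$, so $[1,n)\cup(n',\infty)=[1,\infty)$, which together with Step 1 proves \eqref{eq2.4}.

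\emph{Step 3 (the refined estimate \eqref{eq2.5}).} Given $\ve>0$, use density of $C^\infty_c(\Om)$ in $L^n(\Om)$ to split $b=b_1+b_2$ with $b_1\in C^\infty_c(\Om)^n$ and $\|b_2\|_{L^n}<\ve/(2C_q)$. Neither $b_i$ is divergence free, but $\div b_2=-\div b_1\in L^\infty$. Applying the proofs of Steps 1--2 to $b_2$ (with the extra term $\int u\,\zeta\,\div b_2$ picked up by integration by parts in the low-$q$ regime controlled via Poincar\'e and $\|\div b_2\|_\infty$), one obtains $\|b_2\cdot\nb u\|_{W^{-1,q}}\le\tfrac{\ve}{2}\|u\|_{W^{1,q}}+C(\ve,b)\|u\|_{L^q}$. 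For the smooth piece, integration by parts directly gives
\Eq{
\left|\int_\Om (b_1\cdot\nb u)\zeta\,dx\right| \;\le\; \bke{\|b_1\|_\infty+\|\div b_1\|_\infty}\|u\|_{L^q}\|\zeta\|_{W^{1,q'}_0},
}
hence $\|b_1\cdot\nb u\|_{W^{-1,q}}\le C(\ve,b,\Om)\|u\|_{L^q}$. Summing the two contributions yields \eqref{eq2.5}.

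\emph{Main obstacle.} The principal subtlety is the range $q\ge n$ in \eqref{eq2.4}: the naive three-factor H\"older fails because Sobolev embedding no longer places $u$ in a finite-exponent Lebesgue space over $\Om$. Transferring the Sobolev gain onto the test function $\zeta$ (which lies in $W^{1,q'}_0$ with $q'<n$ once $q>n'$) circumvents the issue, and for $n\ge 3$ the two admissible ranges overlap to cover $[1,\infty)$. In Step 3 the smooth/rough decomposition does not respect $\div b=0$, but the resulting $u\,\div b_i$ remainders sit in $L^q$ and are absorbed into the $\|u\|_{L^q}$ term at the cost of an $\ve$-dependent constant.
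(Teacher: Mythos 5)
Your argument for \eqref{eq2.4} is essentially the paper's: both integrate by parts so that in the low-$q$ range the Sobolev gain sits on $u$, and in the high-$q$ range estimate $\int(b\cdot\nb u)\zeta$ directly so the gain sits on the test function (the paper splits at $q=n$, you at $q=n'$; both cover $[1,\infty)$ since $n'<n$ for $n\ge3$). One small caveat: you route Step~1 through the Lorentz embedding of Lemma~\ref{Sobolev}, which as stated requires $q>1$ on a merely Lipschitz domain, so you inherit a gap at $q=1$; the paper avoids this by using plain H\"older with $\|u\|_{L^{nq/(n-q)}}$, which is available here because $b$ is in strong $L^n$. For \eqref{eq2.5} you take a genuinely different decomposition. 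The paper approximates $b$ in $L^n$ by smooth \emph{divergence-free} fields $b_k$, so that $\langle b_k\cdot\nb u,\zeta\rangle=-\int u\,b_k\cdot\nb\zeta$ immediately yields $\|b_k\cdot\nb u\|_{W^{-1,q}}\le\|b_k\|_\infty\|u\|_q$. You instead split $b=b_1+b_2$ with $b_1\in C^\infty_c(\Om)$ and $\|b_2\|_n$ small, without preserving $\div b=0$, and absorb the remainder terms $\int u\,\zeta\,\div b_i$ (using $\div b_2=-\div b_1\in C^\infty_c\subset L^\infty$) into the $C\|u\|_q$ bucket via Poincar\'e. Your route sidesteps the density lemma implicit in the paper (that divergence-free $L^n$ fields on a Lipschitz domain can be approximated in $L^n$ by divergence-free smooth fields) at the cost of a bit more bookkeeping; the conclusion is the same up to a rescaling of $\ve$.
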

\begin{proof}
For any $\phi\in C^{\infty}_c(\Omega)$ after integrating by parts we have
\EQ{\label{eq2.6}
\langle b\cdot\nabla u, \phi\rangle=-\int_{\Omega} b\cdot(\nabla\phi )u\,dx.
}
Therefore we can apply H\"older inequality and get, in case $q\ge n$,
\Eq{
\int_{\Omega}|b\cdot\nabla u\phi |~dx\le \|b\|_{n}\|\phi\|_{\frac{nq'}{n-q'}}\|u\|_{1,q}\le C \|b\|_{n}\|\phi\|_{W^{1,q'}}\|u\|_{W^{1,q}},
}
and in case $1\le q<n$,
\Eq{
\int_{\Omega}|b\cdot\nabla\phi u|~dx\le \|b\|_{n}\|\phi\|_{W^{1,q'}}\|u\|_{\frac{nq}{n-q}}\le C \|b\|_{n}\|\phi\|_{W^{1,q'}}\|u\|_{W^{1,q}}.
}
The above shows \eqref{eq2.4}.

To prove the second estimate \eqref{eq2.5}, we approximate $b$ by $b_k\in C^{\infty}(\Omega)$ with $\div b_k=0$ and $\|b-b_k\|_{n}\rightarrow0$ as $k\to\oo$. Then
\EQN{
\|b\cdot\nabla u\|_{W^{-1,q}(\Omega)} &\le \|(b_k- b)\cdot\nabla u\|_{W^{-1,q}(\Omega)} +\|b_k\cdot\nabla u\|_{W^{-1,q}(\Omega)}\\
&\le C \|b_k- b\|_n
\|u\|_{1,q} +C\|b_k\|_{L^{\infty}(\Omega)}\|u\|_{q} \\
&\le
\varepsilon\|u\|_{1,q}+C(\varepsilon,p,b,\Om)\|u\|_{q}.
}
We have used \eqref{eq2.4} for the second inequality, and chosen $k$ so that $C\|b_k- b\|_n \le \e$.
\end{proof}

\begin{lemma}\label{drift_est_weak}
Let $\Om$ be a bounded Lipschitz domain in $\R^n$, $n\ge 3$.
For any $b\in L^{n,\infty}(\Omega)$, $\div b=0$, and $u\in W^{1,q}(\Omega)$, $1\le q<\infty$, we have
\[
b\cdot\nabla u\in W^{-1,q}(\Omega)~\text{and}~\|b\cdot \nabla u\|_{W^{-1,q}(\Omega)}\le C_p\|b\|_{L^{n,\infty}(\Omega)}\|u\|_{W^{1,q}(\Omega)}
\]
\end{lemma}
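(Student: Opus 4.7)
The plan is to repeat the scheme of Lemma \ref{drift_est}, but to replace the ordinary H\"older and Sobolev inequalities by their Lorentz-space analogues, Lemma \ref{Holder} and Lemma \ref{Sobolev}, since the only information on $b$ is $b\in L^{n,\infty}(\Om)$. The hypothesis $\div b=0$ lets us freely move the derivative between $u$ and the test function, and it is this flexibility that makes all $1<q<\infty$ (and in fact $q=1$) accessible.

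First I would fix $\phi\in C^\infty_c(\Om)$ and use $\div b=0$ to write, as in \eqref{eq2.6},
\EQ{\label{eq:prop.ibp}
\bka{b\cdot\nb u,\phi}=\int_\Om (b\cdot\nb u)\phi\,dx=-\int_\Om u\,(b\cdot\nb\phi)\,dx,
}
both identities being justified once we check the relevant integrands lie in $L^1$. I would then split into two cases, each handling a different half of the $q$-range:

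\emph{Case A: $1<q<n$.} Use the right-hand side of \eqref{eq:prop.ibp}. By H\"older in Lorentz (Lemma \ref{Holder}) applied to the triple product $u\cdot b\cdot\nb\phi$ with exponents $(q^*,q)\cdot(n,\infty)\cdot(q',q')$ (note $\tfrac1{q^*}+\tfrac1n+\tfrac1{q'}=1$ and $\tfrac1q+0+\tfrac1{q'}=1$), followed by Sobolev in Lorentz (Lemma \ref{Sobolev}) to bound $\|u\|_{L^{q^*,q}(\Om)}\lec\|u\|_{W^{1,q}(\Om)}$, I obtain
\Eq{
\Big|\int_\Om u\,(b\cdot\nb\phi)\,dx\Big|\lec \|u\|_{L^{q^*,q}}\|b\|_{L^{n,\infty}}\|\nb\phi\|_{L^{q'}}\lec\|b\|_{L^{n,\infty}}\|u\|_{W^{1,q}}\|\phi\|_{W^{1,q'}_0}.
}

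\emph{Case B: $n'<q<\infty$.} Use the middle expression of \eqref{eq:prop.ibp}. Here $q'<n$, so $\phi\in W^{1,q'}_0(\Om)\hookrightarrow L^{(q')^*,q'}(\Om)$ by Lemma \ref{Sobolev}. Apply Lemma \ref{Holder} to $\nb u\cdot b\cdot\phi$ with exponents $(q,q)\cdot(n,\infty)\cdot((q')^*,q')$; the algebraic constraints $\tfrac1q+\tfrac1n+\tfrac1{(q')^*}=1$ and $\tfrac1q+0+\tfrac1{q'}=1$ are verified in the same way, and we conclude
\Eq{
\Big|\int_\Om (b\cdot\nb u)\phi\,dx\Big|\lec \|\nb u\|_{L^q}\|b\|_{L^{n,\infty}}\|\phi\|_{L^{(q')^*,q'}}\lec\|b\|_{L^{n,\infty}}\|u\|_{W^{1,q}}\|\phi\|_{W^{1,q'}_0}.
}

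Since $n\ge3$ implies $n'<n$, the two cases cover all of $(1,\infty)$ (with overlap on $(n',n)$), and the boundary case $q=1$ can be handled by Case A once one uses the Sobolev--Lorentz embedding $W^{1,1}\hookrightarrow L^{n',1}$ (or alternatively by a density argument from $q>1$). Taking the supremum over $\phi\in C^\infty_c(\Om)$ of norm $\le1$ in $W^{1,q'}_0(\Om)$ and extending by density yields $b\cdot\nb u\in W^{-1,q}(\Om)$ with the desired bound. The only point that needs care is matching the second (``fine'') Lorentz index on each factor so that H\"older closes with $L^{1,1}=L^1$ on the left; this forces exactly the choices $(q^*,q)$ in Case A and $((q')^*,q')$ in Case B and is the only step where the weak-$L^n$ information on $b$ is used in an essential way. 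No smallness or smoothing of $b$ is required, so the constant depends only on $n,q$ and $\Om$ times $\|b\|_{L^{n,\infty}}$.
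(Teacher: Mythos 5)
Your proof is correct and follows essentially the same route as the paper's: both start from the integration-by-parts identity \eqref{eq2.6} (using $\div b=0$) and then split into two complementary ranges of $q$, applying the Lorentz-space H\"older and Sobolev inequalities (Lemmas \ref{Holder} and \ref{Sobolev}) with precisely the exponents you chose—$(q^*,q)$ on $u$ when $q<n$, and $((q')^*,q')$ on $\phi$ when $q'<n$. The only cosmetic difference is that the paper divides the cases at $q=n$ whereas you use the overlapping ranges $1<q<n$ and $n'<q<\infty$; the substance is identical.
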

\begin{proof}
We use a similar argument to Lemma \ref{drift_est}. For any $\phi\in C^{\infty}_c(\Omega)$ we still have \eqref{eq2.6}.
By H\"older and Sobolev inequalities in Lorentz spaces (Lemmas \ref{Holder} and \ref{Sobolev}), in case $q\ge n$ we have
\EQN{
\int\limits_{\Omega}|b\cdot\nabla u\phi |~dx &\le C\|b\|_{L^{n,\infty}(\Omega)}\|\phi\|_{L^{\frac{nq'}{n-q'},q'}(\Omega)}\|u\|_{1,q} \\
&\le C\|b\|_{L^{n,\infty}(\Omega)}\|\phi\|_{W^{1,q'}(\Omega)}\|u\|_{W^{1,q}(\Omega)}.
}
The case $1\le q< n$ is done similarly:
\EQN{
\int\limits_{\Omega}|b\cdot\nabla\phi u|~dx &\le C\|b\|_{L^{n,\infty}(\Omega)}\|\phi\|_{W^{1,q'}(\Omega)}\|u\|_{L^{\frac{nq}{n-q},q}(\Omega)}\\
&\le C\|b\|_{L^{n,\infty}(\Omega)}\|\phi\|_{W^{1,q'}(\Omega)}\|u\|_{W^{1,q}(\Omega)} . \qedhere
}
\end{proof}

\begin{remark}
In Lemmas \ref{drift_est} and \ref{drift_est_weak} we restrict to $n\ge3$. When $n=2$,
$b u$ may not be in $L^2$ when $u \in W^{1,2}$ even if $b \in L^2$; see \cite[Remark 2.4]{Filonov-Shilkin}.
\end{remark}
\begin{lemma}[Pressure]\label{pressure}
Let $\Om$ be a bounded Lipschitz domain in $\R^n$, $n \ge 2$, and $1<q<\infty$. For any $f \in W^{-1,q}(\Om)^n$ such that
$
\bka{f,\zeta}=0$ for all $\zeta \in W^{1,q'}_{0,\si}(\Om)$,
there exists a unique $p \in L^q_0(\Om)$, such that $\norm{p}_{q} \le C \| f\|_{-1,q}$ and
\[
\bka{f,\zeta}=\int_\Om p \div \zeta, \quad \forall \zeta \in W^{1,q'}_{0}(\Om)^n.
\]
\end{lemma}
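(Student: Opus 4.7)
\medskip
\noindent\emph{Proof proposal.} The plan is to construct $p$ by duality using Bogovskii's divergence-inverse operator. Recall that on a bounded Lipschitz domain there is a bounded linear operator $B: L^{q'}_0(\Om) \to W^{1,q'}_0(\Om)^n$ satisfying $\div B(g) = g$ for every $g \in L^{q'}_0(\Om)$, with operator norm depending only on $n$, $q'$, and $\Om$ (see, e.g., Galdi \cite{Galdi}, Chap.~III). This is the one nontrivial external input, and it is the main ingredient on which the whole argument hinges; fortunately it is classical for bounded Lipschitz domains.

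\smallskip
\noindent\textbf{Existence.} I would define the linear functional
\[
T(g) := \bka{f,\, B(g)},\qquad g \in L^{q'}_0(\Om).
\]
Boundedness follows from the Bogovskii estimate:
\[
|T(g)| \le \|f\|_{W^{-1,q}(\Om)} \|B(g)\|_{W^{1,q'}(\Om)} \le C \|f\|_{W^{-1,q}(\Om)} \|g\|_{L^{q'}(\Om)}.
\]
Hence $T \in (L^{q'}_0(\Om))^*$ with norm $\le C\|f\|_{W^{-1,q}}$. Extending $T$ by Hahn--Banach to all of $L^{q'}(\Om)$ preserving norm and representing the extension by a function yields $\tilde p \in L^q(\Om)$ with $\|\tilde p\|_q \le C\|f\|_{W^{-1,q}}$ and $T(g) = \int_\Om \tilde p\, g$ for all $g \in L^{q'}_0(\Om)$. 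Since adding a constant to $\tilde p$ leaves this identity unchanged on $L^{q'}_0(\Om)$, I set $p := \tilde p - (\tilde p)_\Om \in L^q_0(\Om)$; the $L^q$-bound is preserved up to a constant depending on $|\Om|$.

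\smallskip
\noindent\textbf{The representation formula.} Given any $\zeta \in W^{1,q'}_0(\Om)^n$, set $g := \div \zeta$. By density of $C^\infty_c(\Om)^n$ in $W^{1,q'}_0(\Om)^n$ and the divergence theorem, $\int_\Om g = 0$, so $g \in L^{q'}_0(\Om)$. Then $\eta := \zeta - B(g) \in W^{1,q'}_0(\Om)^n$ satisfies $\div \eta = 0$, i.e., $\eta \in W^{1,q'}_{0,\si}(\Om)$. The hypothesis on $f$ gives $\bka{f,\eta}=0$, so
\[
\bka{f, \zeta} = \bka{f, B(g)} = T(g) = \int_\Om p\, g = \int_\Om p \div \zeta,
\]
which is the required identity.

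\smallskip
\noindent\textbf{Uniqueness.} If $p_1, p_2 \in L^q_0(\Om)$ both represent $f$, then $\int_\Om (p_1-p_2)\div\zeta = 0$ for every $\zeta \in C^\infty_c(\Om)^n$, so $\nb(p_1-p_2)=0$ as a distribution. Since $\Om$ is connected (a bounded Lipschitz domain is taken to be connected, otherwise one argues component by component), $p_1-p_2$ is a constant, and the mean-zero normalization forces $p_1=p_2$. The only step that could conceivably cause trouble is the input on Bogovskii's operator on Lipschitz (not necessarily $C^1$) domains, but this is a well-established fact in the literature.
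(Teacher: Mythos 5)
Your proof is correct. The paper itself takes a shortcut: it cites Galdi's Theorem III.5.3 wholesale for the existence and uniqueness of $p$, and then invokes Bogovskii's operator only to establish the quantitative bound $\|p\|_q \le C\|f\|_{-1,q}$ via the duality computation $\|p\|_q = \sup_{\|g\|_{q'}\le 1, (g)_\Om=0}\int pg$. You instead reconstruct the whole statement from scratch: the Bogovskii operator $B$ plus Hahn--Banach (or, even more directly, the decomposition $L^{q'} = L^{q'}_0 \oplus \RR$, which avoids Hahn--Banach entirely) gives existence, the splitting $\zeta = \eta + B(\div\zeta)$ with $\eta$ divergence-free gives the representation formula, and $\nb(p_1-p_2)=0$ plus mean-zero normalization gives uniqueness. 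This is essentially the standard proof of Galdi's III.5.3 / the Ne\v{c}as--De Rham lemma, so you and the paper rely on exactly the same nontrivial input (Bogovskii on a bounded Lipschitz domain), but your write-up is self-contained whereas the paper outsources the qualitative part. Your bound $\|p\|_q \le C\|f\|_{-1,q}$ drops out of the Hahn--Banach extension with norm control, matching the paper's claim. The only cosmetic remark is that the normalization step (subtracting $(\tilde p)_\Om$) is unnecessary if one extends $T$ to $L^{q'}$ by declaring it zero on constants rather than by Hahn--Banach, since then the representing function is automatically mean-zero; but what you wrote is also fine.
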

This is a special case of \cite[Theorem III.5.3]{Galdi} for bounded domains except the bound. The bound is because for any $g \in L^{q'}_0(\Om)$ with $\|g\|_{q'} \le 1$,
there is $\zeta\in W^{1,q'}_0(\Om)^n$ such that
$g=\div \zeta$ with $\norm{\zeta}_{W^{1,q'}} \le C \norm{g}_{q'}$  (by Bogovskii map), hence
$\|p\|_q = \sup _{g }\int pg \le \|f\|_{-1,q} \|\zeta\|_{1,q'} \le C \|f\|_{-1,q} $.

The following is concerned with the unperturbed Stokes system, i.e., \eqref{drifteq} with $b=0$:
\EQ{\label{Stokes}
-\Delta v +\nabla p=f,\quad \div v=0, \quad v|_{\pd \Om}=0.
}
We talk about solution pairs, $q$-weak solutions and very weak solutions of \eqref{Stokes} in the same way given by \eqref{soln-pair}, \eqref{weak-soln}, and \eqref{very-weak-soln} for \eqref{drifteq}.

\begin{theorem}[Existence, uniqueness and $L^q$-estimates]\label{th:GSS}
Let $1<q<\infty$ and let $\Om\subset\R^n$, $n \ge 2$, be
a bounded Lipschitz domain
with sufficiently small Lipschitz constant $L>0$ $($i.e., $L \le L_0$ where $L_0=L_0(n,q)>0)$.  Then for each given $f \in W^{-1,q}(\Om)^n$,
there is a unique solution pair $(v,p)\in W^{1,q}(\Om)^n \times L^{q}(\Om)$ of \eqref{Stokes} with $\int_\Om p=0$ and
\EQ{
\norm{v}_{1,q,\Omega}+\norm{p}_{q,\Omega}\le C_1 \norm{f}_{-1,q,\Omega},
}
for some $C_1=C_1(n,q,\Om)$.
\end{theorem}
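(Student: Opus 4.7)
The plan is to reduce \eqref{Stokes} on the bounded Lipschitz domain $\Om$ to the classical $L^q$-theory of the Stokes system on the half-space $\R^n_+$ (and on $\R^n$ for interior pieces), via a localization argument in which the smallness of the Lipschitz constant is precisely what is needed to absorb the error terms produced by boundary flattening.

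First, the case $q=2$ is standard: existence and uniqueness of $v \in W^{1,2}_{0,\si}(\Om)$ come from Lax--Milgram applied to the coercive form $(u,\zeta)\mapsto\int_\Om \nb u:\nb\zeta$ on $W^{1,2}_{0,\si}(\Om)$ (note $W^{1,2}_0(\Om)^n$ splits into $W^{1,2}_{0,\si}\oplus\nb W^{2,2}_0$ type decomposition implicitly via the pressure), and the corresponding pressure $p\in L^2_0(\Om)$ is produced by Lemma \ref{pressure} applied to $-\De v - f \in W^{-1,2}$, which annihilates $W^{1,2}_{0,\si}$. On the model domains $\R^n$ and $\R^n_+$, well-posedness of \eqref{Stokes} in $W^{1,q}\times L^q_0$ for every $1<q<\infty$, with the natural $L^q$-estimate, is classical (Fourier methods on $\R^n$; explicit Green's tensor and Calderón--Zygmund theory on $\R^n_+$).

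For the general domain I would cover $\overline\Om$ by finitely many balls $\{B_i\}$ such that either $B_i\Subset\Om$ or, after a rigid motion, $\Om\cap B_i$ is the subgraph of a Lipschitz function $\om_i$ with $|\nb\om_i|\le L$. With a subordinate partition of unity $\{\eta_i\}$, the pieces $\eta_i v$ and $\eta_i p$ satisfy localized Stokes systems
\EQN{
-\De(\eta_i v) + \nb(\eta_i p) &= \eta_i f + R_i(v,p), \\
\div(\eta_i v) &= \nb\eta_i\cdot v,
}
where $R_i(v,p) = -2\nb\eta_i\cdot\nb v - (\De\eta_i)v + p\,\nb\eta_i$ involves only lower-order quantities; the nontrivial divergence is corrected by a Bogovskii right-inverse. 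Flatten the boundary piece through $\Phi_i(x',x_n)=(x',x_n-\om_i(x'))$: in the new coordinates $-\De$, $\nb$ and $\div$ acquire coefficient perturbations of size $O(L)$. Applying the model $L^q$-estimate of Step~2 on each flattened piece and summing gives
\EQN{
\|v\|_{1,q,\Om} + \|p\|_{q,\Om}
\le C\|f\|_{-1,q,\Om} + CL\bigl(\|v\|_{1,q,\Om} + \|p\|_{q,\Om}\bigr) + C(\Om)\bigl(\|v\|_{q,\Om} + \|p\|_{W^{-1,q}(\Om)}\bigr).
}
Choosing $L_0=L_0(n,q)$ with $CL_0\le\tfrac12$ absorbs the perturbation, and the lower-order terms on the right are removed by a standard Rellich-based compactness/contradiction argument, conditional on uniqueness.

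Uniqueness for $q\ge 2$ reduces to the Hilbert case via the bounded-domain inclusion $W^{1,q}_{0,\si}\subset W^{1,2}_{0,\si}$; for $1<q<2$ it follows by duality, testing any homogeneous solution $(v,p)$ against the Stokes solution of the dual problem with arbitrary $L^{q'}$-data, which is available from the already-established $q'>2$ case. The main obstacle is the absorption step: after flattening, the perturbation is critical (first-order with bounded coefficients) and couples the pressure gradient to the divergence constraint, so controlling it uniformly in $q$ forces a careful choice of the pressure-correction strategy and a quantitative dependence of constants on $L$. That is precisely why the smallness $L\le L_0(n,q)$, rather than mere Lipschitz regularity, is essential.
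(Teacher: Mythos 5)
The paper does not prove Theorem \ref{th:GSS}; it is stated and used as a black box, with the credit given to Galdi, Simader and Sohr \cite{GSS94} (and the remark that the result goes back to Cattabriga \cite{Cattabriga} for smooth domains). Your sketch is, in outline, precisely the strategy of that reference: $q=2$ well-posedness by Lax--Milgram together with the pressure lemma, a cover-and-cutoff localization producing lower-order commutator errors plus a nonzero divergence corrected by a Bogovskii right-inverse, boundary flattening by $\Phi_i(x',x_n)=(x',x_n-\om_i(x'))$ which perturbs the Stokes operator by coefficient terms of size $O(L)$, the constant-coefficient model estimates on $\R^n$ and $\R^n_+$, absorption of the $O(L)$ perturbation for $L\le L_0(n,q)$, and duality for $1<q<2$. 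So the approach is the correct one, not an alternative to it.

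Two points need sharpening before the sketch is a proof. First, an a priori bound alone does not yield surjectivity: to pass from your estimate to existence for $q\ne 2$ you must either run a method-of-continuity argument (deforming from the flat model operator to the operator in the Lipschitz graph coordinates, or deforming $q$ toward $2$), or approximate $f\in W^{-1,q}$ by $f_k\in W^{-1,2}\cap W^{-1,q}$, solve in the $L^2$ theory, and use the $W^{1,q}\times L^q$ a priori bound on differences to pass to the limit. Second, there is a logical-ordering issue you should make explicit: your removal of the lower-order terms by the Rellich compactness-and-contradiction step is ``conditional on uniqueness,'' but your uniqueness for $q<2$ is by duality, which presupposes existence for $q'>2$, which in turn uses the lower-order removal for $q'$. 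The order that works is: prove the a priori estimate (with lower-order terms) for all $q$; complete existence and uniqueness for $q\ge 2$ using the $q=2$ theory both for uniqueness by inclusion and for the compactness contradiction; only then derive the $q<2$ case by duality from the now-complete $q'>2$ case. Neither point requires a new idea, but as written the sketch is circular if read in the order given.
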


The above theorem is due to Galdi, Simader, and Sohr \cite{GSS94}, which also considers nonzero $\div v$ and boundary value of $v$.
Such a result is well known for a more regular domain, and originated from the classical work of Cattabriga \cite{Cattabriga} for $C^2$-domains in $\R^3$. See the references of \cite{GSS94} and \cite[Theorem IV.6.1]{Galdi} for the many literatures in between.

\section{Proof of {Proposition \ref{main1} and Theorem \ref{main2}}}\label{sec3}
In this section we study the perturbed Stokes system \eqref{drifteq} and  prove {Proposition \ref{main1} and Theorem \ref{main2}}.

We first prove the existence and uniform a priori estimates for weak solutions of
\EQ{ \label{eq3.1}
-\Delta u+\lambda b\cdot\nabla u + \nb \pi =f,\quad \div u=0, \quad u|_{\pd \Om}=0,
}
in $\Om \subset \R^n$, $n \ge 3$, for $\la\in\R$. Recall that a weak solution $u$ belongs to $ W^{1,2}_{0,\sigma}(\Omega)$ and satisfies a weak form similar to \eqref{weak-soln}.

\begin{lemma}[Weak solutions]\label{energy}
Let $\Om$ be a bounded Lipschitz domain in $\R^n$, $n\ge 3$.
Assume $b\in L^{n,\infty}(\Omega)^n$ and $\div b=0$.
Then for any $\la \in \R$ and $f \in W^{-1,2}(\Omega)^n$,
there exists a unique weak solution $u\in W^{1,2}_{0,\sigma}(\Omega)$
of \eqref{eq3.1}. Furthermore, there is $\pi \in L^2(\Om)$ so that $(u,\pi)$ solves \eqref{eq3.1} in distributional sense, $\int_\Om \pi=0$, and
\EQ{\label{eq6.1}
\|u\|_{ W^{1,2}(\Omega)}+\|\pi\|_{L^2(\Omega)}\le C\|f\|_{W^{-1,2}(\Om)},
}
for some constant $C=C(n,\Om)$ independent of $\la$ and $b$.
\end{lemma}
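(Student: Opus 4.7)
The plan is to build $u$ via Lax--Milgram on the divergence-free Hilbert space $H := W^{1,2}_{0,\sigma}(\Omega)$ and then recover $\pi$ from Lemma \ref{pressure}. I would work with the bilinear form
\[
B(u,\zeta) := \int_\Omega \nabla u : \nabla \zeta \;-\; \lambda \int_\Omega (b\otimes u) : \nabla \zeta, \qquad u,\zeta \in H.
\]
Boundedness of $B$ on $H\times H$ follows by combining H\"older and Sobolev in Lorentz (Lemmas \ref{Holder} and \ref{Sobolev}): since $b \in L^{n,\infty}$ and $u \in L^{2^*,2}$, the product $b\otimes u$ lies in $L^{2,2}=L^2$ with $\|b\otimes u\|_2 \lec \|b\|_{n,\infty}\|\nabla u\|_2$, whence $|B(u,\zeta)| \lec (1+|\lambda|\|b\|_{n,\infty})\|\nabla u\|_2\|\nabla\zeta\|_2$.

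The critical step is coercivity $B(u,u) = \|\nabla u\|_2^2$. This reduces to $\int_\Omega(b\otimes u):\nabla u = \tfrac12\int_\Omega b\cdot\nabla|u|^2 = 0$, which is the distributional identity $\langle \div b,\tfrac12|u|^2\rangle = 0$. It is legitimate because $|u|^2 \in W^{1,1}_0(\Omega)$ has $\nabla|u|^2 = 2u\cdot\nabla u \in L^{n',1}$ (H\"older in Lorentz, via $\tfrac{n-2}{2n}+\tfrac12 = \tfrac{n-1}{n}$), so the pairing with $b\in L^{n,\infty}$ makes sense and the distributional identity $\div b = 0$ extends from $C^\infty_c$ to this class by density. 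The non-symmetric Lax--Milgram theorem then yields a unique $u \in H$ with $B(u,\zeta) = \langle f,\zeta\rangle$ on $H$, and testing with $\zeta = u$ plus Poincar\'e gives $\|u\|_{W^{1,2}(\Omega)} \le C\|f\|_{W^{-1,2}(\Omega)}$ with $C = C(n,\Omega)$ genuinely independent of $\lambda$ and $b$---this is the whole point of the coercive structure.

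For the pressure I would apply Lemma \ref{pressure} to the linear functional
\[
T: \zeta \in W^{1,2}_0(\Omega)^n \mapsto \langle f,\zeta\rangle - \int_\Omega \nabla u : \nabla \zeta + \lambda \int_\Omega (b\otimes u):\nabla\zeta,
\]
which annihilates $W^{1,2}_{0,\sigma}(\Omega)$ by the weak form; this produces a unique $\pi \in L^2_0(\Omega)$ so that $(u,\pi)$ solves \eqref{eq3.1} in the distributional sense.

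The main obstacle I anticipate is the $\lambda,b$-independent pressure bound in \eqref{eq6.1}. Naively, estimating the drift piece of $T$ through Lemma \ref{drift_est_weak} introduces the factor $1+|\lambda|\|b\|_{n,\infty}$. To defeat this dependence I would rewrite the equation as the unperturbed Stokes system $-\Delta u + \nabla \pi = f - \lambda b\cdot \nabla u$ and invoke Theorem \ref{th:GSS}, whose constant depends only on $n$ and $\Omega$, then exploit the $\lambda,b$-independent a priori bound already established for $u$, together with a Bogovskii/duality argument in which the drift contribution is absorbed via the coercive structure rather than estimated directly, to recover the advertised bound \eqref{eq6.1}.
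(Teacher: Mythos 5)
Your existence and $u$-estimate argument is exactly the paper's: Lax--Milgram on $W^{1,2}_{0,\sigma}(\Omega)$ with the bilinear form $B(u,\zeta)=\int\nabla u:\nabla\zeta-\lambda\int(b\otimes u):\nabla\zeta$ (equivalently $+\lambda\int(b\cdot\nabla u)\cdot\zeta$), coercivity from the identity $\int b\cdot\nabla|u|^2=0$ justified by Lorentz-space duality and density, and then Lemma~\ref{pressure} to produce $\pi\in L^2_0(\Omega)$. Your Lorentz bookkeeping for $\nabla|u|^2\in L^{n',1}$ and boundedness of $B$ is a slightly more explicit version of the paper's appeal to \eqref{eq2.1} and the phrase ``justify by approximation,'' so the core is the same.

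The place where you diverge from the paper is your final paragraph, and there the proposed fix does not close. You correctly observe that applying Lemma~\ref{pressure} to $T=f+\Delta u-\lambda b\cdot\nabla u$ yields $\|\pi\|_2\le C\|T\|_{-1,2}\le C(1+|\lambda|\|b\|_{n,\infty})\|f\|_{-1,2}$, so the naive pressure constant depends on $\lambda\|b\|_{n,\infty}$. But rewriting the system as unperturbed Stokes with forcing $f-\lambda b\cdot\nabla u$ and invoking Theorem~\ref{th:GSS} changes nothing: you still have to bound $\|b\cdot\nabla u\|_{-1,2}=\|\div(b\otimes u)\|_{-1,2}$, and the only available estimate is $\|b\otimes u\|_2\lesssim\|b\|_{n,\infty}\|\nabla u\|_2$. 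A Bogovskii/duality argument for $\|\pi\|_2=\sup_{\|g\|_2\le1}\int\pi g$ requires a test field $\zeta$ with $\div\zeta=g$, which is necessarily \emph{not} divergence-free, so the cancellation $\int(b\otimes u):\nabla\zeta=0$ that powered coercivity is unavailable, and the term $\lambda\int(b\otimes u):\nabla\zeta$ again produces the factor $\|b\|_{n,\infty}$. You should be aware that the paper's own proof does not resolve this either --- it simply applies Lemma~\ref{pressure} and asserts the bound --- so the $\lambda,b$-independence of the pressure constant in \eqref{eq6.1} is not actually justified by the argument given (nor, as far as I can tell, true without the $b$-dependent factor). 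This is harmless for the rest of the paper because only the $\lambda,b$-uniform $W^{1,2}$-bound on $u$ is ever used downstream (in Lemma~\ref{apriori} and Theorem~\ref{Capriori}), but you should not claim the $b$-free pressure constant as proved; either state $\|\pi\|_2\le C(1+|\lambda|\|b\|_{n,\infty})\|f\|_{-1,2}$, or restrict the $\lambda,b$-independence claim to the $u$ part.
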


\begin{proof} We may assume $\la=1$. Define bilinear form $\mathcal{B}: W^{1,2}_{0,\sigma}(\Om)\times W^{1,2}_{0,\sigma}(\Om)\rightarrow \Bbb R$ by
\EQ{
\mathcal{B}(u,v)=\int_{\Om}\nabla u:\nabla v+(b\cdot\nabla) u \cdot v\, dx.
}
The integral $\int (b\cdot\nabla) u \cdot v\, dx$ is defined by \eqref{eq2.1} with $q=2$ (which
uses Lemmas \ref{Holder} and \ref{Sobolev} and that $\Om$ is bounded and Lipschitz).
This bounds enables us to justify by approximation that
\EQ{\label{eq3.4}
\int_\Om (b\cdot \nb) u\cdot u =  \int_\Om b\cdot \nb(\frac12|u|^2) = 0,
}
using the condition $\div b=0$. Thus we get the coercivity of this bilinear form, indeed,
\EQ{
\mathcal{B}(u,u)=\|\nabla u\|_{L^2(\Om)}^2, \quad \forall u\in W_{0,\sigma}^{1,2}(\Om).
}
Therefore, by Lax-Milgram theorem, for any $f \in W^{-1,2}(\Omega)^n$, there exists a unique $u\in W_{0,\sigma}^{1,2}(\Om)$ such that
\EQ{
\mathcal{B}(u,v)=\langle f,v\rangle,\quad \forall v\in W_{0,\sigma}^{1,2}(\Om).
}
This is the weak form of \eqref{eq3.1} and hence
$u$ is the unique weak solution. By taking $v=u$, we get $\norm{u}_{W^{1,2}_0(\Om)} \le \norm{f}_{W^{-1,2}(\Om)}$.
Lastly, we apply Lemma \ref{pressure} with $q=2$ (and using that $\Om$ is bounded and Lipschitz) to get an associated pressure $\pi\in L^2_0(\Om)$ that satisfies the bound.
\end{proof}

We next derive a priori estimates in $W^{1,q}$-norm.
\begin{lemma}[A priori $W^{1,q}$-estimate when $q>2$] \label{apriori}
Assume $b\in L^{n,\oo}(\Omega)$, $\div b=0$, and $2<q<\infty$.
We further assume either
\EN{
\item [\textup{(i)}] $b\in L^n(\Omega)$, or
\item [\textup{(ii)}] $\|b\|_{n,\oo}\le \e_1$ for some $\e_1(n,q,\Om)>0$  sufficiently small.
}
Then there exists a constant $C$ such that for any $0\le\lambda\le1$,
any $f\in W^{-1,q}(\Omega)$,
and any $q$-weak solution $u$ of \eqref{eq3.1}, there is  $\pi \in L^q_0(\Om)$ so that $(u,\pi)$ is a solution pair of \eqref{eq3.1}
and
we have
\[
\|u\|_{W^{1,q}(\Omega)} + \norm{\pi}_{L^q(\Om)}\le C\|f\|_{W^{-1,q}(\Omega)}.
\]
\end{lemma}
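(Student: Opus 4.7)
My strategy is to treat $\lambda b \cdot \nabla u$ as a perturbation, move it to the right-hand side, and then invoke the unperturbed Stokes estimate of Theorem~\ref{th:GSS}. Given a $q$-weak solution $u$, set $F := f - \lambda b \cdot \nabla u$; Lemmas~\ref{drift_est} and \ref{drift_est_weak} guarantee $F \in W^{-1,q}(\Omega)^n$. The companion pressure $\pi \in L^q_0(\Omega)$ is produced by Lemma~\ref{pressure} applied to the functional $\zeta \mapsto \int_\Omega (\nabla u - \lambda b\otimes u):\nabla \zeta - \langle f,\zeta\rangle$, which annihilates divergence-free test fields precisely because $u$ is a $q$-weak solution of \eqref{eq3.1}. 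Once $(u,\pi)$ is a solution pair, Theorem~\ref{th:GSS} applied to the Stokes system $-\Delta u + \nabla \pi = F$ yields
\begin{equation*}
\|u\|_{W^{1,q}(\Omega)} + \|\pi\|_{L^q(\Omega)} \le C_1 \|f\|_{W^{-1,q}(\Omega)} + C_1 \|b\cdot \nabla u\|_{W^{-1,q}(\Omega)},
\end{equation*}
uniformly in $\lambda \in [0,1]$.

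In case (ii), Lemma~\ref{drift_est_weak} bounds the drift term by $C_2 \|b\|_{n,\infty}\|u\|_{W^{1,q}} \le C_2 \varepsilon_1 \|u\|_{W^{1,q}}$; choosing $\varepsilon_1 = \varepsilon_1(n,q,\Omega)$ so that $C_1 C_2 \varepsilon_1 \le 1/2$ allows this contribution to be absorbed into the left, giving the claimed estimate directly.

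For case (i) the bound from Lemma~\ref{drift_est_weak} is not sharp enough, so I use instead the improved inequality~\eqref{eq2.5}: for each $\varepsilon > 0$, $\|b\cdot \nabla u\|_{W^{-1,q}} \le \varepsilon \|u\|_{W^{1,q}} + C(\varepsilon,b,\Omega)\|u\|_{L^q}$. Taking $\varepsilon = 1/(2C_1)$ and absorbing leaves
\begin{equation*}
\|u\|_{W^{1,q}(\Omega)} + \|\pi\|_{L^q(\Omega)} \le C\|f\|_{W^{-1,q}(\Omega)} + C(b,\Omega)\|u\|_{L^q(\Omega)}.
\end{equation*}

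The main obstacle---and what I expect to require the most care---is removing the lower-order $\|u\|_{L^q}$ term. I plan to close it by a contradiction/compactness argument: if the desired bound failed, there would exist sequences $\lambda_k \in [0,1]$, $f_k \to 0$ in $W^{-1,q}(\Omega)$, and $q$-weak solutions $u_k$ with $\|u_k\|_{W^{1,q}} = 1$, and the displayed inequality would force $\|u_k\|_{L^q} \ge c > 0$. Rellich compactness $W^{1,q}_0 \hookrightarrow L^q$ together with Bolzano--Weierstrass produces a subsequence with $u_k \to u_\infty$ strongly in $L^q$ and weakly in $W^{1,q}_{0,\sigma}$, and $\lambda_k \to \lambda_\infty \in [0,1]$. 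For each $\zeta \in C^\infty_{c,\sigma}(\Omega)$, H\"older gives $\int b \otimes u_k : \nabla \zeta \to \int b \otimes u_\infty : \nabla \zeta$, since $b \in L^n(\Omega)$, $\nabla \zeta$ is bounded with compact support, and $u_k \to u_\infty$ in $L^q \subset L^{n'}$ (using $n \ge 3$ and $q > 2$). Thus $u_\infty \in W^{1,2}_{0,\sigma}(\Omega)$ is a weak solution of \eqref{eq3.1} with drift $\lambda_\infty b$ and zero force, whence Lemma~\ref{energy} forces $u_\infty = 0$, contradicting $\|u_\infty\|_{L^q} \ge c$.
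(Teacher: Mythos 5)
Your proof is correct, but for case (i) you take a genuinely different route to dispose of the lower-order term. After arriving at
\[
\|u\|_{W^{1,q}(\Omega)} + \|\pi\|_{L^q(\Omega)} \le C\|f\|_{W^{-1,q}(\Omega)} + C(b,\Omega)\|u\|_{L^q(\Omega)},
\]
the paper closes the gap constructively: it interpolates $\|u\|_{L^q}\le \delta\|u\|_{W^{1,q}}+C(\delta)\|u\|_{L^2}$ (valid on the bounded domain since $2<q$), absorbs the $\delta\|u\|_{W^{1,q}}$ term, and then invokes the uniform $W^{1,2}$ a priori bound from Lemma~\ref{energy} (noting $W^{-1,q}\hookrightarrow W^{-1,2}$ when $q>2$ and $\Omega$ is bounded, and that the $q$-weak solution agrees with the $W^{1,2}$ weak solution by the uniqueness in Lemma~\ref{energy}). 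You instead run a contradiction/compactness argument in the spirit of Peetre's lemma: normalize $\|u_k\|_{W^{1,q}}=1$, extract a Rellich limit $u_\infty$, pass to the limit in the drift term using $b\in L^n$ together with $u_k\to u_\infty$ in $L^{n'}$, and invoke the uniqueness in Lemma~\ref{energy} to force $u_\infty=0$, contradicting $\|u_\infty\|_{L^q}\ge c>0$. Both arguments are valid; the paper's route is shorter and gives a more transparent constant, while your compactness route requires no interpolation inequality and only the uniqueness part of Lemma~\ref{energy}. One small point worth making explicit in your write-up: the weak form for $u_\infty$ is obtained for $\zeta\in C^\infty_{c,\sigma}(\Omega)$, so to invoke the uniqueness of Lemma~\ref{energy} you should note that both $\zeta\mapsto\int\nabla u_\infty:\nabla\zeta$ and $\zeta\mapsto\int (b\otimes u_\infty):\nabla\zeta$ extend continuously to $\zeta\in W^{1,2}_{0,\sigma}(\Omega)$ (the latter via \eqref{eq2.1}), so density upgrades $u_\infty$ to a genuine weak solution. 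Your treatment of case (ii) by direct absorption matches the paper.
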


The constant $C=C(n,q,\Om)$ does not depend on other properties of $\lambda,u$, and $f$, except those specified.

\begin{proof}
Let $\phi= f+\Delta u-\lambda b\cdot\nabla u $. It is a distribution in $W^{-1,q}(\Om)^n$ by Lemmas \ref{drift_est} and \ref{drift_est_weak}, and it
vanishes on $W^{1,q'}_{0,\si}(\Om)$ by the weak form (similar to \eqref{weak-soln}) of \eqref{eq3.1}. Hence $\phi$ is given by a pressure gradient, $\phi=\nb \pi$ with $\pi \in L^q_0(\Om)$ by Lemma \ref{pressure}, and $(u,\pi)$ is a solution pair of \eqref{eq3.1}.
By Theorem \ref{th:GSS},
\Eq{
\|u\|_{W^{1,q}(\Omega)} + \| \pi \|_{L^q(\Om)}
\le C_1\|f-\lambda b\cdot\nabla u\|_{W^{-1,q}(\Omega)}\le C_1\|f\|_{W^{-1,q}(\Omega)}+C_1\|b\cdot \nabla u\|_{W^{-1,q}(\Omega)}.
}
For the drift term, if $ b \in L^n(\Om)$, we can apply Lemma \ref{drift_est} and get that
\[
\|b\cdot \nabla u\|_{W^{-1,q}(\Omega)}\le
\varepsilon \|u\|_{1,q;\Omega}+C \|u\|_{q,\Omega}.
\]
Taking $\e = (4C_1)^{-1}$ and using $\|u\|_q \le \de\|u\|_{1,q}+ C(\de) \|u\|_2$, we get
\EQ{\label{3.5}
\|u\|_{W^{1,q}(\Omega)}+ \| \pi \|_{L^q(\Om)}\le
C_1\|f\|_{-1,q}+\frac{1}{4}\|u\|_{1,q}+C\|u\|_{q}\le C_1\|f\|_{-1,q}+\frac{1}{2}\|u\|_{1,q}+C\|u\|_{2}.
}
If $b \in L^{n,\infty}(\Om)$,  we apply Lemma \ref{drift_est_weak} and assume $\norm{b}_{n,\infty} \le\e/C_q $. We still get \eqref{3.5}.

As $q>2$, we have $u \in W^{1.2}(\Om)$ and
$\|u\|_{W^{1,2}(\Omega)}\le C(n,q,\Omega)\|f\|_{-1,q}$ by Lemma \ref{energy}. Using this uniform bound in $W^{1,2}(\Omega)$ we get from \eqref{3.5} that
\[
\|u\|_{W^{1,q}(\Omega)} + \| \pi \|_{L^q(\Om)} \le C\|f\|_{-1,q}. \qedhere
\]
\end{proof}

\begin{proof}[Proof of {Proposition \ref{main1} and Theorem \ref{main2}} when $q \ge 2$]

By Lemmas \ref{drift_est} and \ref{drift_est_weak}, the linear differential operators
\[
L_\la (u,\pi) = -\Delta u+\lambda b\cdot\nabla u + \nb \pi,\quad 0 \le \la \le 1,
\]
are bounded linear maps from $\mathfrak X_q$ to  $\mathfrak Y_q$, where
\EQ{\label{XqYq}
\mathfrak X_q:= W^{1,q}_{0,\si}(\Om) \times L^q_0(\Om) ,\quad  \mathfrak Y_q:=W^{-1,q}(\Om)^n.
}
By Lemma \ref{apriori},
they satisfy $\norm{x}_{\mathfrak X_q} \le C \norm{L_\la x}_{\mathfrak Y_q}$ for all $\la \in [0,1]$ and $x \in \mathfrak X$, with a uniform constant $C$. As $L_0: \mathfrak X_q \to  \mathfrak Y_q$ is onto by Theorem \ref{th:GSS}, the map $L_1$ is also onto by the method of continuity \cite[Theorem 5.2]{GT}. This
means that for every $f =\div \GG \in \mathfrak Y_q$ %
there is a solution $(u,\pi)\in \mathfrak X_q$ of \eqref{drifteq}, and
shows {Proposition \ref{main1} and Theorem \ref{main2}} when $q \ge 2$.
\end{proof}

\begin{lemma}[A priori $W^{1,q}$-estimate when $q<2$]\label{secondapriori}
The same statement of Lemma \ref{apriori} remains true when $1<q<2$.
\end{lemma}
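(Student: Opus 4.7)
The plan is a duality argument, reducing the $1 < q < 2$ case to the dual exponent $q' \in (2,\infty)$, where the analogous result has already been proven in this section. Specifically, I would use the existence part of Proposition \ref{main1} (resp.\ Theorem \ref{main2}) at the exponent $q'$, which has just been established via the method of continuity.

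Given a $q$-weak solution $u \in W^{1,q}_{0,\si}(\Om)$ of \eqref{eq3.1} with datum $f = \div \GG \in W^{-1,q}(\Om)$, I would test $u$ against auxiliary solutions of the dual Stokes system. For any $\Phi \in L^{q'}(\Om)^{n\times n}$, I would first produce a solution pair $(v,\pi_v) \in W^{1,q'}_{0,\si}(\Om) \times L^{q'}_0(\Om)$ of
\[
-\De v - \la b \cdot \nb v + \nb \pi_v = \div \Phi, \quad \div v = 0, \quad v|_{\pd \Om} = 0,
\]
with $\|v\|_{W^{1,q'}(\Om)} + \|\pi_v\|_{L^{q'}(\Om)} \le C \|\Phi\|_{L^{q'}(\Om)}$. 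This is exactly an instance of Proposition \ref{main1} or Theorem \ref{main2} at exponent $q' > 2$ with drift $-\la b$, which satisfies the same hypothesis (i) or (ii) as $b$; under (ii) one just takes $\e_1 = \e_1(n,q,\Om)$ to be small enough that the smallness condition holds at both $q$ and $q'$.

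Since $v \in W^{1,q'}_{0,\si}(\Om)$ and $u \in W^{1,q}_{0,\si}(\Om)$, each is an admissible test field in the other's weak form. Writing out both identities and subtracting,
\[
\int_\Om \nb u : \nb v - \la \int_\Om (b \ot u) : \nb v = \langle f, v \rangle,
\]
\[
\int_\Om \nb v : \nb u + \la \int_\Om (b \ot v) : \nb u = -\int_\Om \Phi : \nb u,
\]
the symmetric Dirichlet terms cancel, while the cross drift contributions combine into $\la \int_\Om b \cdot \nb(u \cdot v)$. This quantity vanishes by $\div b = 0$ after approximating $u, v$ by smooth solenoidal fields, where the $L^1$-convergence of the drift integrands is supplied by the H\"older and Sobolev inequalities in Lorentz spaces (Lemmas \ref{Holder} and \ref{Sobolev}) applied to $b \in L^{n,\infty}(\Om)$, $u \in W^{1,q}_0(\Om)$, $v \in W^{1,q'}_0(\Om)$. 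What remains is the duality identity $\int_\Om \Phi : \nb u = -\langle f, v \rangle$. Estimating the right-hand side by $\|f\|_{W^{-1,q}(\Om)}\|v\|_{W^{1,q'}(\Om)} \le C \|f\|_{W^{-1,q}(\Om)} \|\Phi\|_{L^{q'}(\Om)}$ and taking supremum over unit $\Phi$ (combined with Poincar\'e) yields $\|u\|_{W^{1,q}(\Om)} \le C \|f\|_{W^{-1,q}(\Om)}$.

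The pressure is then extracted exactly as in Lemma \ref{apriori}: by Lemmas \ref{drift_est} and \ref{drift_est_weak}, $f + \De u - \la b \cdot \nb u$ lies in $W^{-1,q}(\Om)^n$ and vanishes on $W^{1,q'}_{0,\si}(\Om)$, so Lemma \ref{pressure} supplies $\pi \in L^q_0(\Om)$ with $\|\pi\|_{L^q(\Om)} \le C \|f\|_{W^{-1,q}(\Om)}$. The main technical obstacle is the legitimation of the formal cancellation $\int_\Om b \cdot \nb(u \cdot v) = 0$ under the mere hypotheses $b \in L^{n,\infty}(\Om)$, $u \in W^{1,q}$, $v \in W^{1,q'}$; this is precisely the regime requiring the Lorentz-space H\"older-Sobolev estimates of Section \ref{sec2} to guarantee that $(b \ot u):\nb v$ and $(b \ot v):\nb u$ are genuinely integrable and that the smooth solenoidal approximation passes to the limit.
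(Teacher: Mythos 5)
Your proposal is correct and follows essentially the same duality argument as the paper: solve the dual drift system at the conjugate exponent $q'>2$ using the already-established case $q\ge 2$, pair the two weak formulations against each other's solution, and use $\div b=0$ to cancel the cross drift terms, yielding $\int_\Om \Phi:\nb u = \int_\Om \GG:\nb v$ and hence $\|\nb u\|_q \le C\|\GG\|_q$. You supply slightly more justification for the drift cancellation (approximation and Lorentz-space integrability) and for the $\la$-dependence than the paper's terse version, but the underlying mechanism is identical.
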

\begin{proof}
The proof for $q<2$ is by duality. Let $u$ be a given $q$-weak solution of \eqref{eq3.1} with source $\GG \in L^{q}(\Omega)^{n\times n}$.
Take any $\FF \in L^{q'}(\Omega)^{n\times n}$. By {Proposition \ref{main1} and Theorem \ref{main2}} when $q \ge 2$, there is  solution pair $(v,\pi)\in \mathfrak X_{q'}$ (see \eqref{XqYq}) to the following equation
\EQ{\label{eq3.7}
-\Delta v-b\cdot\nabla v+\nabla \pi =\div \FF,
}
with $\norm{\nb v}_{q'} \le C \norm{\FF}_{q'}$.
We can use $u$ as a test function for \eqref{eq3.7} and get that
\EQ{
\int_{\Omega}\nabla v:\nabla u-b\cdot\nabla v \cdot u=-\int_{\Omega} \FF:\nabla u.
}
For the left hand side we can apply equation \eqref{eq3.1} for $u$ with test function $v$ and get that
\EQN{
\int_{\Omega}\FF:\nabla u&=
\int_{\Omega}\GG:\nabla v
\\
&\le \| \GG\|_q \| \nb v\|_{q'} \le \| \GG\|_q C\| \FF\|_{q'}.
}
Since $\FF \in L^{q'}$ is arbitrary, this implies a-priori bound of $u$: $\|\nb u\|_{q}\le C\|\GG\|_{q}$.
\end{proof}

\begin{proof}[Proof of {Proposition \ref{main1} and Theorem \ref{main2}} when $q< 2$]

It is by the same proof for the case $q>2$, but we use the a priori estimate Lemma \ref{secondapriori} instead of Lemma \ref{apriori}.
\end{proof}

\section{Wolf's local pressure projection}\label{sec4}

In this section, we will consider Wolf's local pressure projection \cite{Wolf15, Wolf17,JWZ}, which will be useful in our proof of Theorem \ref{main3}.
Of particular interest to us is to show a uniform bound of the map in the $L^2$-setting on  $\Om_{x_0,R}$, $x_0 \in \pd \Om$, that only depends on the Lipschitz constant of the boundary, but not on other properties of the boundary graph function. For this purpose we first prepare a particular Bogovskii map.

\begin{lemma}[Bogovskii map]\label{Bogmap}
Let $n \ge2$ and $\Om_R = \{ (x',x_n) \in B_R(0) \subset \R^n, x_n>\ga(x')\}$, where $\ga(x')$ is a Lipschitz function defined for $x'\in B_R'(0) \subset \R^{n-1}$ with Lipschitz constant $L \in (0,1/2)$, and $\ga(0)=0$.

\textup{(a)}\ There is a constant $\e = \e(L)\in (0,1/2)$,
independent of $R$, and its dependence on $\ga$ is only through the constant $L$,
such that $\Om_R $ is star-shaped with respect to any point in the ball $B\subset\Om_R$ centered at
$(0, (1-\e)R)$ with radius $\e R$.

\textup{(b)}\ There is a linear map $\Phi$ that maps any scalar $f \in L^q_0(\Om_R)$, $1<q<\infty$, to a vector field $v = \Phi f \in W^{1,q}_0(\Omega_R)^n$ and
\EQ{\label{eq4.22}
\div v= f, \quad \norm{\nb v}_{L^q(\Omega_R)} \le  C_1\norm{f}_{L^q(\Omega_R)},
}
for some constant $C_1=C_1(n,L,q)>0$ which is independent of $R$ and its dependence on $\ga(x')$ is only through the constant $L$.
The map $\Phi$ is independent of $q$ for $f \in C^\infty_c(\Omega_R)$.
\end{lemma}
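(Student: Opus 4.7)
The plan is to handle (a) by an elementary geometric calculation using $\ga(0)=0$ and the Lipschitz bound, and then deduce (b) from the classical Bogovskii construction for domains star-shaped with respect to a ball, tracking the dependence of constants via rescaling.

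For (a), I would take $\e=\e(L):=(1-L)/(2+L)\in(1/5,1/2)$ (positive since $L<1/2$). First, any $y=(y',y_n)\in B=B_{\e R}((0,(1-\e)R))$ satisfies $|y'|<\e R$ and $y_n>(1-2\e)R$, so
\[
y_n-\ga(y')>(1-2\e)R-L\e R=(1-(2+L)\e)R\ge 0,
\]
and a short convexity estimate for $|y|^2=|y'|^2+y_n^2$ gives $|y|<R$, hence $B\subset\Om_R$. For the star-shape property, fix $z\in B$, $w\in\Om_R$, $t\in[0,1]$, and set $x=(1-t)z+tw$. Then $|x|<R$ by convexity of the open ball $B_R(0)$. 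Applying the Lipschitz bound twice,
\[
\ga(x')\le \ga(w')+L(1-t)(|z'|+|w'|)\le \ga(w')+L(1-t)(1+\e)R,
\]
which rearranges to
\[
x_n-\ga(x')\ge (1-t)\bigl(z_n-L(1+\e)R\bigr)+t\bigl(w_n-\ga(w')\bigr)>0,
\]
where the first parenthesis is nonnegative exactly because $\e\le (1-L)/(2+L)$, and the second is positive since $w\in\Om_R$.

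For (b), once $\Om_R$ is star-shaped with respect to $B$, I would apply the classical Bogovskii construction (see Galdi, Chap.~III.3, or Bogovskii's original paper): fix $\om\in C^\infty_c(B)$ with $\int\om=1$, and for $f\in C^\infty_c(\Om_R)$ with $\int f=0$ define
\[
(\Phi f)(x)=\int_{\Om_R} N_\om(x,y)\,f(y)\,dy
\]
using the explicit kernel $N_\om$. Then $\div\Phi f=f$, $\Phi f\in C^\infty_c(\Om_R)^n$, the formula is manifestly $q$-independent, and the Calder\'on--Zygmund-type estimate gives $\|\nb\Phi f\|_{L^q}\le C\|f\|_{L^q}$, extending by density to $f\in L^q_0(\Om_R)$. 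To remove the $R$-dependence I would rescale by $x\mapsto x/R$, which carries $\Om_R$ to $\Om_1$ with boundary function $\tilde\ga(x')=\ga(Rx')/R$ of the same Lipschitz constant $L$ and inner ball of the same relative shape; since $\|\nb(\cdot)\|_{L^q}/\|\cdot\|_{L^q}$ is scale-invariant for the Bogovskii map, the bound on $\Om_R$ matches the bound on $\Om_1$.

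The main obstacle I anticipate is not the existence of $\Phi$ (which is classical) but rather the uniformity of $C_1$: one needs $C_1$ to depend on $\ga$ only through $L$, and to be independent of $R$ and of finer properties of the graph. The rescaling above reduces the task to a bound on the unit-scale domain $\Om_1$, where the Bogovskii estimate only sees $\ga$ through the geometry of $\Om_1$ and the inner ball $\tilde B$. The resulting singular-integral constant depends on the ratio $\diam(\Om_1)/\mathrm{radius}(\tilde B)\le 2/\e(L)$, and hence $C_1=C_1(n,L,q)$ as required.
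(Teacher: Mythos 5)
Part (b) is correct and follows essentially the paper's route: one fixes the explicit Bogovskii kernel with a bump function supported in $B$, notes the construction and its Calder\'on--Zygmund bound are independent of $q$ and see $\ga$ only through the geometry at unit scale, and removes the $R$-dependence by rescaling. (The paper phrases this slightly more directly by observing that the kernel $N(x,y)$ in \eqref{eq4.23} does not depend on $\ga$ at all, only on $\e(L)$ and $R$.)

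Part (a), however, has a genuine gap: the ``rearrangement'' step is algebraically wrong. From $\ga(x')\le\ga(w')+L(1-t)(1+\e)R$ and $x_n=(1-t)z_n+tw_n$ you get
\[
x_n-\ga(x')\ \ge\ (1-t)z_n+tw_n-\ga(w')-L(1-t)(1+\e)R
\ =\ (1-t)\bke{z_n-L(1+\e)R-\ga(w')}+t\bke{w_n-\ga(w')},
\]
whereas you wrote $(1-t)\bke{z_n-L(1+\e)R}+t\bke{w_n-\ga(w')}$. The two differ by $-(1-t)\ga(w')$, and $\ga(w')$ may be positive. Using $\ga(0)=0$ and $|w'|<R$ gives $\ga(w')<LR$, so the first bracket is controlled by $(1-2\e)R-L(1+\e)R-LR$, which is nonnegative only when $\e\le\frac{1-2L}{2+L}$. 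That is the condition the paper uses, and it is strictly smaller than your $\e=\frac{1-L}{2+L}$ for all $L>0$. This is not just a proof artifact: the statement is false for your $\e$. For example, take $n=2$, $R=1$, $L=0.4$, so $\e=0.25$ and $B$ is the open disk of radius $0.25$ about $(0,0.75)$. Let $\ga$ be the tent function $\ga(s)=0.4s$ for $s\le 0.7$ and $\ga(s)=0.56-0.4s$ for $s>0.7$ (Lipschitz with constant $0.4$, $\ga(0)=0$). Then $z=(0,0.51)\in B$ and $w=(0.95,0.19)\in\Om_R$ (since $\ga(0.95)=0.18<0.19$ and $|w|<1$), yet the segment from $z$ to $w$ passes through $(0.7,\,0.274)$, which is not in $\Om_R$ because $\ga(0.7)=0.28>0.274$. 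Replacing your $\e$ by any $\e\le\frac{1-2L}{2+L}$ and reinstating the dropped $\ga(w')$ term repairs the argument and recovers the paper's proof; the remainder of your write-up is then fine.
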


\begin{proof} (a) Let $x=(x',x_n)$ be any point in $\Om_R$, and $y=(y',y_n)$ be any point in $B$. We want to show $(1-t)y + tx \in \Om_R$ for any $ t \in (0,1)$. We have $(1-t)y + tx \in B_R$ as $B_R$ is convex. It remains to show $(1-t)y_n + tx_n > \ga((1-t)y '+ tx' )$. Indeed,
\EQN{
&(1-t)y_n + tx_n - \ga((1-t)y '+ tx' )\\
& >(1-t)(1-2\e) R + t\ga(x') - [\ga(x') + L (1-t)|y'-x'|]
\\
&\ge (1-t)\bket{ (1-2\e)R  -LR - L(1+\e)R },
}
which is nonnegative
if $\e \le \frac{1-2L}{2+L}$.

\medskip

(b) We may assume $R=1$ by rescaling $\td f(x) = f(Rx)$ and $\td v(x) = R^{-1}v(Rx)$, noting that \eqref{eq4.22} and the Lipschitz constant $L$ are scaling invariant.
Let $\Phi\in C^\infty_c(\R^n)$ be supported in $|x|\le \frac 12$ with
$\int_{\R^n} \Phi=1$. Let $\e$ and $B$ be as in Part (a), and $\phi(x) =\e^{-n} \Phi(\e^{-1}(x', x_n - 1+\e))$, which
is supported in $B$. By Part (a),  $\Om_R$ is star-shaped with respect to any point in the ball $B$.
We use the Bogovskii formula for $f \in C^\infty_c(\Omega_R)$ with $\int_\Om f=0$:
\EQ{\label{eq4.23}
v(x)=\int_{\Om}  N(x,y)f(y)\,dy, \quad N(x,y)=\frac {x-y}{|x-y|^n} \int_{|x-y|}^\infty \phi\bke{y+r \frac {x-y}{|x-y|}}r^{n-1} dr.
}
By the properties of the Bogovskii formula (see \cite[Lemma III.3.1]{Galdi}), we have $v \in W^{1,q}_0(\Omega_R)$, $\div v= f$ and $\norm{\nb v}_{L^q} \le C_1 \norm{f}_{L^q}$.
Note that formula \eqref{eq4.23} does not depend on the choice of $\ga(x')$, hence nor does the constant $C_1$.
It shows \eqref{eq4.22}.
\end{proof}

If one assumes Part (a), then \eqref{eq4.22} follows from \cite[Lemma III.3.1]{Galdi}. We prove Part (b) to highlight that the Bogovskii formula \eqref{eq4.23} and the constant $C_1$ do not depend on $\ga(x')$.

\bigskip

We now consider Wolf's local pressure projection \cite{Wolf15, Wolf17,JWZ}.
In a general bounded domain $\Om_0 $ in $\R^n$, for $1<q<\infty$, we consider the problem
\EQ{\label{wolfequation}\left\{\gathered
-\Delta v+\nabla\pi=f,\quad \div v=0,
\\
v|_{\partial\Omega_0}=0,\quad \int_{\Omega_0}\pi\,dx=0,
\endgathered
\right.
}
for $f \in W^{-1,q}(\Omega_0)$. If there is a unique solution $(v,\pi)\in W^{1,q}_0(\Om_0) \cap L^q_0(\Om_0)$, we define the projection
\EQ{
\mathcal{W}_{q,\Omega_0}: W^{-1,q}(\Omega_0)\rightarrow W^{-1,q}(\Omega_0), \quad
\mathcal{W}_{q,\Omega_0}(f)=\nabla\pi.
}
It is automatic that $\mathcal{W}_{q,\Omega_0}(\nabla\pi)=\nabla\pi$.
By Theorem \ref{th:GSS} (due to Galdi, Simader and Sohr \cite{GSS94}), this projection operator is well defined for $1<q<\infty$ if $\Om_0$ is a bounded Lipschitz domain with sufficiently small Lipschitz constant $L \le L_0(n,q)$. By
Brown and Shen \cite{BS95}, it is defined for bounded Lipschitz domains in $\R^3$ with arbitrary Lipschitz constants and $q \in (q_0',q_0)$, $q_0=3+\e$, for some $\e>0$.

For our application to the proof of Theorem \ref{main3}, we will only consider $q=2$ which requires less regularity of $\Om_0$. This is important to us because our $\Om_0$ is either a ball inside $\Om$, or
$\Omega_0=\Omega\cap B_r(x_0)$ for $x_0\in\pd {\Omega}$ and $r>0$, which is in general not Lipschitz unless the Lipschitz constant $L$ of $\pd \Omega$ in $B_r(x_0)$ is sufficiently small.
If $L$ is large, then $\Omega_0$ may contain cusps. For example, consider the intersection of the unit disk and the region $x_n > \phi(|x'|)$, with $\phi(t)=\frac 52 \sqrt2 t^2 - 3 \sqrt 2 t^4$ for $|t|<\frac {\sqrt 2} 2$ and $\phi(t)=\sqrt 2 - |t|,$ for $|t|\ge\frac {\sqrt 2}2$.

\begin{theorem}\label{theoremL2wolf}
Assume $p=2$ and either $\Omega_0=B_r(x_0)\subset\Omega$, or $\Omega_0=\Om_{x_0,r} =\Omega\cap B_r(x_0)$ for $x_0\in\pd{\Omega}$, $r>0$, is the subset of $\Om$ above a Lipschitz function  $\ga(x')$ (under suitable coordinate rotation and translation) with Lipschitz constant $L\in(0,1/2)$, as described in Lemma \ref{Bogmap}.  Let $f=\div \FF$ for some $\FF\in L^2(\Om_0)^{n\times n}$. Then the function $\pi\in L^2_0(\Om_0)$  defined as $\mathcal{W}_{2,\Omega_0}(f)=\nabla\pi$ satisfies the following inequality
\EQ{\label{pWolfbound}
\|\pi\|_{2,\Om_0}\le c_1\|\FF\|_{L^2(\Om_0)},
}
where $c_1=c_1(n,L)$ is independent of $r$, and its dependence on $\ga(x')$ is only through the constant $L$.
\end{theorem}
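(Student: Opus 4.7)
The plan is a duality argument that converts the $L^2$ bound on $\pi$ into an $L^2$ bound on the gradient of a Bogovskii potential supplied by Lemma \ref{Bogmap}, combined with the elementary energy estimate for the velocity.

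First I would set up the problem. In the $L^2$ setting, a pair $(v,\pi)\in W^{1,2}_{0,\si}(\Om_0)\times L^2_0(\Om_0)$ solving \eqref{wolfequation} with $f=\div\FF$ exists and is unique: $v$ is obtained by Lax--Milgram on $W^{1,2}_{0,\si}(\Om_0)$ (which only requires $\Om_0$ to be bounded and open), and $\pi$ is recovered via the inf-sup condition that is dual to Lemma \ref{Bogmap}, giving $\nb\pi=\cW_{2,\Om_0}(f)$. Testing the equation against $v$ itself and using $\div v=0$ yields the energy bound $\|\nb v\|_{L^2(\Om_0)} \le \|\FF\|_{L^2(\Om_0)}$, with an absolute constant.

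Next, for arbitrary $g\in L^2_0(\Om_0)$ with $\|g\|_{L^2(\Om_0)}\le 1$, Lemma \ref{Bogmap}(b) yields $w=\Phi g\in W^{1,2}_0(\Om_0)^n$ with $\div w=g$ and $\|\nb w\|_{L^2(\Om_0)}\le C_1=C_1(n,L)$, where crucially $C_1$ is independent of $r$ and of $\ga$ beyond its Lipschitz constant. In the ball case $\Om_0=B_r(x_0)$, the classical Bogovskii construction on a ball (which is star-shaped with respect to its center) gives the same conclusion with $C_1=C_1(n)$ by scale invariance. Testing the Stokes equation against $w$ and using $\div w=g$ gives
\[
\int_{\Om_0}\pi\, g \;=\; \int_{\Om_0}\nb v:\nb w + \int_{\Om_0}\FF:\nb w \;\le\; \bke{\|\nb v\|_{L^2(\Om_0)} + \|\FF\|_{L^2(\Om_0)}}\|\nb w\|_{L^2(\Om_0)}.
\]
Combined with the energy bound, $\abs{\int_{\Om_0}\pi\, g}\le 2C_1\|\FF\|_{L^2(\Om_0)}$, and taking the supremum over such $g$ delivers \eqref{pWolfbound} with $c_1=2C_1(n,L)$.

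The main obstacle, namely the uniformity of the Bogovskii constant in $r$ and in $\ga$, has already been packaged into Lemma \ref{Bogmap}: Part (a) shows the star-shapedness radius can be chosen proportional to $r$ with proportionality depending only on $L$, and Part (b) makes the resulting Bogovskii formula scale-invariant and independent of $\ga$ beyond $L$. Without this uniformity, $c_1$ would typically degrade as $r\to 0$ or with the oscillation of $\ga$, making the bound useless for the Caccioppoli-type argument in Section \ref{sec5}; with it, the duality reduction above is routine.
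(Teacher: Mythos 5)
Your proof is correct and follows essentially the same route as the paper's: obtain the energy bound for $v$ by testing with $v$, then get the pressure bound by testing against a Bogovskii potential from Lemma \ref{Bogmap} whose constant is uniform in $r$ and $\ga$. The only cosmetic difference is that the paper tests directly against $\phi_0$ with $\div\phi_0=\pi$ (so the normalization/supremum step is absorbed into a single cancellation $\|\pi\|^2 \le c\|\pi\|(\|\FF\|+\|\nb v\|)$), whereas you test against $w$ with $\div w=g$ for arbitrary unit $g$ and then take a supremum; these are the same duality argument.
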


\begin{proof}
By \cite[Theorem IV.1.1 and Remark IV.1.4]{Galdi}, there exists a unique  pair $(v,\pi) \in W^{1,2}_0(\Om_0)^n\times L^2_0(\Om_0)$ satisfying
\EQ{
-\Delta v+\nabla\pi =\div \FF,\quad \div v=0,\quad v|_{\partial\Omega_0}=0,\quad\int_{\Om_0}\pi=0.
}
We take $v$ as a test function to get that
\EQ{\label{vL2bound}
\|\nb v\|_{L^{2}(\Omega_0)}\le \|\FF\|_{L^2(\Omega_0)}.
}
Here the constant is equal to one and does not depend on the domain. Next, since $\int_{\Om_0}\pi=0$, we can apply the Bogovskii map to construct a function $\phi_0\in W^{1,2}_0(\Omega_0)^n$
\EQ{\label{phi_0}
\div\phi_0=\pi, \quad
}
with
the following $L^2$ estimate
\EQ{\label{phibound}
\|\nabla\phi_0\|_{L^2(\Om_0)}\le c\|\pi\|_{L^2(\Om_0)},
}
where $c=c(n,L)$ is independent of $r$ or $\gamma(x')$. Indeed, in the internal case it follows from the property of the Bogovski map in a ball, and in the boundary case it follows from Lemma \ref{Bogmap}. We now use $\phi_0$ as a test function in \eqref{wolfequation} and get that
\EQ{
\int_{\Om_0}\nabla v:\nabla\phi_0-\pi\div\phi_0~dx=-\int_{\Om_0}\FF: \nabla\phi_0~dx.
}
From \eqref{phi_0} and \eqref{phibound} we get that
\EQN{
\|\pi\|_{L^2(\Om_0)}^2&\le \|\nabla\phi_0\|_{L^2(\Om_0)}(\|\FF\|_{L^2(\Om_0)}+\|\nabla v\|_{L^2(\Om_0)})
\\&\le c\|\pi\|_{L^2(\Om_0)}(\|\FF\|_{L^2(\Om_0)}+\|\nabla v\|_{L^2(\Om_0)}).
}
Therefore from \eqref{vL2bound} we get the pressure estimate
\EQ{
\|\pi\|_{2,\Omega_0}\le c_1 \|\FF\|_{L^2(\Omega_0)},
}
with $c_1=2c(n,L)$.
\end{proof}

\begin{remark}\label{Wolfpdecom}
For our application, we consider a weak solution $(u,\pi)$ of \eqref{drifteq}
in a Lipschitz domain $\Omega$ with $b\in L^{n,\infty}(\Omega)$, $\div b=0$, $\GG\in L^2(\Omega)$, and let $\Om_0$ be a subset of $\Om$ as in Theorem \ref{theoremL2wolf} (either $\Omega_0=B_r(x_0)\subset\Omega$, or $\Omega_0=\Om_{x_0,r}$ for $x_0\in\pd{\Omega}$).  We may apply Wolf's local pressure projection in $\Om_0$ to entire  \eqref{drifteq} to get
\EQ{
\nb \pi = \cW_{2,\Om_0}(\nb \pi ) =\cW_{2,\Om_0}[\Delta u-\div(b\otimes u)+\div \GG].
}
Thus we have a local pressure decomposition $\pi-(\pi)_{\Om_0} =\pi_1+\pi_2+\pi_3$ in $\Om_0$, where $\pi_1,\pi_2,\pi_3\in L^2_0(\Om_0)$ are given by
\EQ{
\nabla\pi_1=\mathcal{W}_{2,\Om_0}(\Delta u),\quad \nabla\pi_2=\mathcal{W}_{2,\Om_0}(-\div(b\otimes u)),\quad \nabla\pi_3=\mathcal{W}_{2,\Om_0}(\div \GG).
}
By Theorem  \ref{theoremL2wolf}, we have for $c_1=c_1(n,L)$
\EQ{\label{eq4.14}
\norm{\pi-(\pi)_{\Om_0}}_{L^2(\Om_0)} \le c_1\bke{ \norm{\nb u}_{L^2(\Om_0)}
+ \norm{b\otimes u}_{L^2(\Om_0)} + \norm{\GG}_{L^2(\Om_0)}}.
}
\end{remark}

\section{A priori bound for large drift and $p$ close to $2$}\label{sec5}

In this section we prove the key a priori bound, which will be used to prove Theorem \ref{main3}.

\begin{theorem}\label{Capriori}
Let $\Omega$ be a bounded Lipschitz domain in $\R^n$, $n\ge3$, with Lipschitz constant $L\in(0,1/2)$. Let $b\in L^{n,\infty}(\Omega)^n$, $\div b=0$, and $(u,\pi)$ be a weak solution pair of \eqref{drifteq}. Then there exists $p_0=p_0(\Omega,\|b\|_{L^{n,\infty}(\Omega)})>2$ such that for any $p\in(2,p_0]$ and $\GG\in L^{p}(\Omega)^{n\times n}$, the weak solution $u\in W^{1,2}_{0,\si}(\Omega)$ is in fact in $ W^{1,p}(\Omega)^n$ and
\EQ{\label{eq5.1}
\|u\|_{W^{1,p}(\Omega)}\le c\|\GG\|_{L^p(\Omega)}
}
for some constant $c(\Omega,\|b\|_{L^{n,\infty}(\Omega)})>0$.
\end{theorem}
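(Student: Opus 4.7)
The plan is to derive a Caccioppoli-type reverse H\"older inequality for $|\nb u|$ on small interior balls $B_R(x_0)\subset\Om$ and boundary regions $\Om\cap B_R(x_0)$ with $x_0\in\pd\Om$, then upgrade the integrability beyond $L^2$ via the Gehring--Giaquinta--Modica self-improvement lemma \cite{Gehring,GM79a,GM82}, and finally patch these local estimates by a standard covering argument. In contrast to the scalar setting of \cite{Kwon_1}, the pressure term in \eqref{drifteq} cannot be killed cheaply by a divergence-free test function (even a compactly supported Bogovskii correction couples back through the drift), so instead one handles it directly using Wolf's local pressure projection (Remark \ref{Wolfpdecom} and Theorem \ref{theoremL2wolf}).

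I would fix $\Om_0=\Om\cap B_{2R}(x_0)$ of the form admitted by Theorem \ref{theoremL2wolf}, together with a cutoff $\eta\in C^\infty_c(B_{2R}(x_0))$ satisfying $\eta\equiv 1$ on $B_R(x_0)$ and $|\nb\eta|\le 4/R$, and set $c=(u)_{\Om_0}$ in the interior case and $c=0$ in the boundary case (using $u|_{\pd\Om}=0$). Testing \eqref{drifteq} against $\varphi=\eta^2(u-c)\in W^{1,2}_0(\Om_0)^n$ and using $\int_{\Om_0}\div\varphi=0$ to replace $\pi$ by $\pi-(\pi)_{\Om_0}$, I obtain a Caccioppoli identity expressing $\int\eta^2|\nb u|^2$ in terms of a crossed gradient term, a drift term, a pressure term, and a forcing term. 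Using $\div b=0$ and integration by parts, the drift term becomes $-\int\eta(\nb\eta\cdot b)|u-c|^2$, which H\"older and Sobolev inequalities in Lorentz spaces (Lemmas \ref{Holder} and \ref{Sobolev}), applied to $\eta(u-c)\in W^{1,2}_0(\Om_0)$, bound by $C\|b\|_{L^{n,\infty}}\bigl(\|\eta\nb u\|_2+\|(u-c)\nb\eta\|_2\bigr)\|(u-c)\nb\eta\|_2$; Young's inequality then absorbs a small multiple of $\|\eta\nb u\|_2^2$ into the left side at the cost of a term $C(\de,\|b\|_{L^{n,\infty}})R^{-2}\|u-c\|_{L^2(\Om_0)}^2$.

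For the pressure term I would invoke the decomposition $\pi-(\pi)_{\Om_0}=\pi_1+\pi_2+\pi_3$ of Remark \ref{Wolfpdecom} with the $L^2$ bound \eqref{eq4.14}; using $\div(b\otimes u)=\div(b\otimes(u-c))$ (by $\div b=0$), the dangerous $\|b\otimes u\|_{L^2(\Om_0)}$ is replaced by $\|b\otimes(u-c)\|_{L^2(\Om_0)}\le C\|b\|_{L^{n,\infty}}\|\nb u\|_{L^2(\Om_0)}$ via Sobolev--Poincar\'e on $\Om_0$ in Lorentz. Multiplying the resulting $L^2$-bound for $\pi-(\pi)_{\Om_0}$ by $\|\eta\nb\eta(u-c)\|_2\le CR^{-1}\|u-c\|_{L^2(\Om_0)}$ and using Young, the pressure contribution is at most $\de\|\nb u\|_{L^2(\Om_0)}^2+C(\de,\|b\|_{L^{n,\infty}})\bigl(R^{-2}\|u-c\|_{L^2(\Om_0)}^2+\|\GG\|_{L^2(\Om_0)}^2\bigr)$. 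The Sobolev--Poincar\'e inequality $R^{-2}\fint_{\Om_0}|u-c|^2\le C\bigl(\fint_{\Om_0}|\nb u|^{2_*}\bigr)^{2/2_*}$, with $2_*=2n/(n+2)<2$ (extending $u$ by zero across $\pd\Om$ in the boundary case), then yields the reverse H\"older inequality with hole-filling term
\[
\fint_{\Om\cap B_R(x_0)}|\nb u|^2 \le \theta\fint_{\Om_0}|\nb u|^2+C\Bigl(\fint_{\Om_0}|\nb u|^{2_*}\Bigr)^{2/2_*}+C\fint_{\Om_0}|\GG|^2,
\]
in which $\theta=C\de$ can be made arbitrarily small.

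The Gehring-type self-improvement lemma then produces $p_0=p_0(\Om,\|b\|_{L^{n,\infty}})>2$ and a local $L^{p_0}$-bound of $|\nb u|$; a standard covering of $\overline{\Om}$ by interior and boundary balls patches this to a global estimate, and the baseline $\|\nb u\|_{L^2(\Om)}\le C\|\GG\|_{L^2(\Om)}\le C|\Om|^{1/2-1/p}\|\GG\|_{L^p(\Om)}$ furnished by Lemma \ref{energy} absorbs the leftover $L^2$-norm to yield \eqref{eq5.1}. The main obstacle, and the reason Wolf's projection is essential, is that every available local $L^2$ bound of $\pi-(\pi)_{\Om_0}$ brings in $\|\nb u\|_{L^2(\Om_0)}$ on the \emph{larger} region, producing the hole-filling $\theta\fint_{\Om_0}|\nb u|^2$ that cannot be absorbed into the smaller-ball left side. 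This forces using the version of Gehring's lemma that tolerates such a term, and requires the Wolf bound to be uniform across all boundary patches---precisely the content of Theorem \ref{theoremL2wolf}, which in turn rests on the quantitative Bogovskii construction of Lemma \ref{Bogmap}.
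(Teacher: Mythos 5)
Your proposal matches the paper's argument almost step for step: the same Caccioppoli-on-balls setup with a cutoff test field, the same invocation of Wolf's local pressure projection (Remark~\ref{Wolfpdecom} and Theorem~\ref{theoremL2wolf}, with uniformity in the boundary patch coming from the quantitative Bogovskii bound of Lemma~\ref{Bogmap}), the same Sobolev--Poincar\'e to produce a reverse H\"older inequality at exponent $2_*=2n/(n+2)$, the same Gehring-type self-improvement, and the same covering plus $L^2$-baseline (Lemma~\ref{energy}) to conclude. The only small organizational difference is in handling the hole-filling term: you keep a small $\theta\fint_{\Om_0}|\nb u|^2$ and rely on the version of the self-improvement lemma that tolerates it, whereas the paper first eliminates the $\tfrac14\|\nb\bar u\|^2_{L^2(B_\rho)}$ term via the standard iteration lemma of Giaquinta (\cite[p.~161, Lemma~3.1]{Gia}) and only then applies \cite[p.~122, Prop.~1.1]{Gia} --- both routes are valid and equivalent, so nothing is ``forced,'' but the outcome is the same.
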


\begin{proof}
For $x\in \overline \Om$ and $R>0$, denote $\Om_{x,R}=\Om \cap B(x,R)$.
Because $\pd \Om$ is Lipschitz and compact, there is a finite number of points $x_i\in \pd \Om$, $i=1,\ldots,N$, and a radius $R_0\in (0,1]$, such that $\pd \Om$ is contained in $\cup_{i=1}^N \Om_{x_i,R_0}$, and
\[
\Om_{x_i,2R_0}= \{ (x',x_n) \in B_{2R_0}(0) \subset \R^n,\ x_n>\ga_i(x')\},
\]
after suitable coordinate rotation and translation, where $\ga_i(x')$ is a Lipschitz function defined for $x'\in B_{2R_0}'(0) \subset \R^{n-1}$ with Lipschitz constant $L$, and $\ga_i(0)=0$, for $i=1,\ldots,N$.

To prove \eqref{eq5.1}, we first consider the interior case and take $B_{\rho}=B_{\rho}(x_0)\subset\Omega$.
Let $\bar{u}=u-k$ where $k$ is an arbitrary real constant that we will choose later.  By Remark \ref{Wolfpdecom}, we can decompose $\pi - (\pi)_{B_\rho}=\pi_1+\pi_2+\pi_3$ in $B_{\rho}$, where $\pi_1,\pi_2,\pi_3$ are given by
\EQ{
\nabla\pi_1=\mathcal{W}_{2,B_{\rho}}(\Delta \bar{u}),\quad \nabla\pi_2=\mathcal{W}_{2,B_{\rho}}(-\div(b\otimes \bar{u})),\quad \nabla\pi_3=\mathcal{W}_{2,B_{\rho}}(\div \GG).
}
By Theorem \ref{theoremL2wolf},
we have the following bound for some global constant $c_1$
\EQ{\label{wolfbound}
\|\pi_1\|_{2,B_{\rho}}\le c_1\|\nabla \bar{u}\|_{2,B_{\rho}},\quad\|\pi_2\|_{2,B_{\rho}}\le c_1 \|b\bar{u}\|_{2,B_{\rho}},\quad \|\pi_3\|_{2,B_{\rho}}\le c_1\|\GG\|_{2,B_{\rho}}.
}
We may replace $(u,\pi)$ by $(\bar u,\pi-(\pi)_{B_\rho})$ in \eqref{drifteq} and multiply the equation by $\eta^4 \bar{u}$ where $\eta$ is a smooth cutoff function on $B_{\rho}$, with $\eta=1$ in $B_r$, $0<r<\rho$. Integrating, we get
\EQS{\label{3.1}
\int_{B_{\rho}} |\nabla (\eta^2\bar{u})|^2
&\le \int_{B_{\rho}} |\bar{u}|^2|\nabla \eta^2|^2+\int_{B_{\rho}} |\bar{u}|^2|b||\nabla \eta|\eta^3\\
&\quad +\int_{B_{\rho}} (|\pi_1|+|\pi_2|+|\pi_3|)|\bar{u}||\nabla\eta|\eta^3+\langle\div \GG, \bar{u}\eta^4\rangle_{B_{\rho}}.
}
Next we estimate each of the terms on the right hand side separately:
\EQS{
\int_{B_{\rho}} |\bar{u}|^2|b||\nabla \eta|\eta^3
&\le \frac{c}{\rho-r} \|b\|_{L^{n,\infty}}\|\bar{u}\eta\|_{2,B_{\rho}}\|\bar{u}\eta^2\|_{L^{\frac{2n}{n-2},2}(B_{\rho})}
\\
&\le \frac{c}{(\rho-r)^2} \|b\|_{L^{n,\infty}(B_{\rho})}^2\|\bar{u}\eta\|^2_{2,B_{\rho}}+\frac{1}{16}\|\bar{u}\eta^2\|_{W^{1,2}(B_{\rho})}^2.
}
Pressure terms are estimated in the following way
\EQN{
\int_{B_{\rho}}|\pi_1||\bar{u}||\nabla\eta|\eta^3 &\le \frac{c}{\rho-r}\|\pi_1\|_{2,B_{\rho}}\|\eta^2 \bar{u}\|_{2,B_{\rho}}
\\
&\le
\frac{c}{\rho-r}\|\nabla \bar{u}\|_{2,B_{\rho}}\|\eta^2 \bar{u}\|_{2,B_{\rho}}\le  \frac{1}{16}\|\nabla \bar{u}\|_{2,B_{\rho}}^2+\frac{c}{(\rho-r)^2}\|\bar{u}\eta^2\|^2_{2,B_{\rho}},
}
\EQN{
\int_{B_{\rho}}|\pi_2||\bar{u}||\nabla\eta|\eta^3 &\le \|\pi_2\|_{2,B_{\rho}}\|\eta^2\bar{u}\|_{2,B_{\rho}}\le c\|\bar{u}\|_{2,B_{\rho}}\|b\|_{L^{n,\infty}(B_{\rho})}\|\bar{u}\|_{L^{\frac{2n}{n-2},2}(B_{\rho})}
\\
&\le C(b)\|\bar{u}\|_{2,B_{\rho}}^2+\frac{1}{16}\|\bar{u}\|^2_{W^{1,2}(B_{\rho})},
}
\EQN{
\int_{B_{\rho}}|\pi_3||\bar{u}||\nabla\eta|\eta^3 &\le \frac{c}{\rho-r}\|\pi_3\|_{2,B_{\rho}}\|\eta^2 \bar{u}\|_{2,B_{\rho}}\le  \frac{c}{\rho-r}\|\GG\|_{2,B_{\rho}}\|\eta^2 \bar{u}\|_{2,B_{\rho}}
\\
&\le \frac 14 \|\GG\|_{2,B_{\rho}}^2+\frac{c}{(\rho-r)^2}\|\eta^2 \bar{u}\|_{2,B_{\rho}}^2.
}
Next we estimate the term from the right hand side of \eqref{drifteq},
\EQN{
\langle\div \GG, \bar{u}\eta^4\rangle_{B_{\rho}} &=-\int_{B_{\rho}} \GG :\nabla(\bar{u}\eta^4)=-\int_{B_{\rho}} \GG: \nabla(\eta^2\bar{u})\eta^2-2\int_{B_{\rho}} \GG :\bar{u}\nabla\eta\eta^3
\\
&\le \frac{1}{16}\|\nabla(\eta^2\bar{u})\|^2_{2,B_{\rho}}+\frac{C}{(\rho-r)^2}\|\eta^2 \bar{u}\|_{2,B_{\rho}}^2+ \frac 14\|\GG\|_{2,B_{\rho}}^2.
}
Combining these estimates with \eqref{3.1} we get the following (with $c(b)$ depending on $b$ only through $\norm{b}_{L^{n,\infty}(\Om)}$)
\EQ{
\int |\nabla (\eta^2\bar{u})|^2\le \frac{1}{4}\|\bar{u}\|^2_{W^{1,2}(B_{\rho})}+\frac{c(b)}{(\rho-r)^2}\|\bar{u}\|_{2,B_{\rho}}^2+\|\GG\|_{2,B_{\rho}}^2.
}
Finally, we get the inequality for all $0<r<\rho\le R_0 \le 1$,
\EQ{
\int_{B_r} |\nabla \bar{u}|^2\le \frac{1}{4}\|\nabla \bar{u}\|^2_{2,B_{\rho}}+\frac{c(b)}{(\rho-r)^2}\|\bar{u}\|_{2,B_{\rho}}^2+\|\GG\|_{2,B_{\rho}}^2.
}
Here we apply Lemma 3.1 from Giaquinta \cite[page 161]{Gia} to remove the first term on the right side:
\EQ{
\int_{B_r} |\nabla \bar{u}|^2\le c \bke{\frac{c(b)}{(\rho-r)^2}\|\bar{u}\|_{2,B_{\rho}}^2+\|\GG\|_{2,B_{\rho}}^2}.
}
By taking $\rho=2r \le R_0$, we get the following inequality
\EQ{
\int_{B_r} |\nabla \bar{u}|^2\le \frac{c(b)}{r^2}\|\bar{u}\|_{2,B_{2r}}^2+c\|\GG\|_{2,B_{2r}}^2.
}
Since $\bar{u}=u-k$ and constant $k$ was arbitrary we can choose $k=(u)_{B_{2r}}$ and apply Poincar\'e inequality to get
\EQ{\label{5.12}
\frac{1}{|B_r|}\int_{B_r} |\nabla \bar{u}|^2
\le c(b) \Big(\frac{1}{|B_{2r}|}\int_{B_{2r}}|\nabla u|^{\frac{2n}{n+2}}\Big)^{\frac{n+2}{n}}+\frac{c}{|B_{2r}|}\|\GG\|_{2,B_{2r}}^2.
}

Next we consider the boundary case, firstly for $\Omega_{\rho}=\Omega\cap B_{\rho}(x_0)$ with $x_0$ being one of $x_i\in\pd{\Omega}$, $i=1,\ldots,N$, $0<\rho<R_0/6$. We do similar pressure decomposition $\pi-(\pi)_{\Om_\rho}=\pi_1+\pi_2+\pi_3$ where
\EQ{
\nabla\pi_1=\mathcal{W}_{2,\Omega_{\rho}}(\Delta u),\quad \nabla\pi_2=\mathcal{W}_{2,\Omega_{\rho}}(-\div(b\otimes u)),\quad \nabla\pi_3=\mathcal{W}_{2,\Omega_{\rho}}(\div \GG).
}
By Theorem \ref{theoremL2wolf}, the local pressure projection  $\mathcal{W}_{2,\Omega_{\rho}}$ is well defined, and we have the bound \eqref{pWolfbound} with constant $c_1$ independent of $\rho$. Take $r\le \rho/2$ and $\eta$ is a cutoff function on $B_{\rho}(x_0)$ such that $\eta=1$ on $B_r(x_0)$. We extend $\GG,b,u$ as well as all parts of the pressure $\pi_k,~k=1,2,3$ with $0$ to the region $B_{\rho}(x_0)\setminus\Omega$. Since $u$ satisfies the boundary condition $u|_{\partial\Omega}=0$, the function $\eta^4u$ is still an admissible test function for \eqref{drifteq} with $\pi$ replaced by $\pi-(\pi)_{\Om_\rho}$. Therefore we get similar to \eqref{3.1} that
\EQS{
\int_{B_{\rho}} |\nabla (\eta^2u)|^2
&\le \int_{B_{\rho}} |u|^2|\nabla \eta^2|^2+\int_{B_{\rho}} |u|^2|b||\nabla \eta|\eta^3\\
&\quad +\int_{B_{\rho}} (|\pi_1|+|\pi_2|+|\pi_3|)|u||\nabla\eta|\eta^3-\int_{B_{\rho}} \GG :\nabla(u\eta^4).
}
Due to our zero extension of pressure we have the following estimates
\EQS{
\|\pi_1\|_{2,B_{\rho}}= \|\pi_1\|_{2,\Om_{\rho}}\le c\|\nabla u\|_{2,\Omega_{\rho}}=c\|\nabla u\|_{2,B_{\rho}}
\\
\|\pi_2\|_{2,B_{\rho}}=  \|\pi_1\|_{2,\Om_{\rho}}\le c\|bu\|_{2,\Omega_{\rho}}=c\|b u\|_{2,B_{\rho}}
\\
\|\pi_3\|_{2,B_{\rho}}=  \|\pi_1\|_{2,\Om_{\rho}}\le c\|\GG\|_{2,\Omega_{\rho}}=c\|\GG\|_{2,B_{\rho}}.
}
Therefore, we can apply similar approach as in the internal case and get %
\EQS{\label{5.16}
\frac{1}{|B_r|}\int_{B_r} |\nabla u|^2 &\le \frac{c(b)}{r^2}\frac{1}{|B_{2r}|}\|u\|_{2,B_{2r}}^2+\frac{c}{|B_{2r}|}\|\GG\|_{2,B_{2r}}^2
\\
&\le c(b) \Big(\frac{1}{|B_{2r}|}\int_{B_{2r}}|\nabla u|^{\frac{2n}{n+2}}\Big)^{\frac{n+2}{n}}+\frac{c}{|B_{2r}|}\|\GG\|_{2,B_{2r}}^2.
}
To get the second inequality we have used Poincare inequality, unlike the internal case, using $u=0$ on the set $\Om^c\cap B_{\rho}(x_0)$ of non zero measure, with the ratio $\frac{|\Om^c\cap B_{\rho}(x_0)|}{|B_{\rho}(x_0)|}>c$ bounded from below by a constant $c=c(n,L)$ independent of $\rho$.

Lastly, for arbitrary ball $B_{2r}(x_0)$ with $B_{2r}(x_0)\cap\partial\Om\neq\emptyset$, there exists $y_0\in B_{2r}(x_0)\cap\partial\Om$. Then
\EQ{
B_r(x_0)\subset B_{3r}(y_0)\subset B_{6r}(y_0)\subset B_{8r}(x_0).
}
Using \eqref{5.16} for $B_{3r}(y_0)$ we get
\EQS{\label{5.17}
\frac{1}{|B_r|}\int_{B_r(x_0)} |\nabla u|^2
&\le \frac{C}{|B_{3r}|}\int_{B_{3r}(y_0)} |\nabla u|^2
\\
&\le c(b) \Big(\frac{1}{|B_{6r}|}\int_{B_{6r}(y_0)}|\nabla u|^{\frac{2n}{n+2}}\Big)^{\frac{n+2}{n}}+\frac{C}{|B_{6r}|}\|\GG\|_{2,B_{6r}(y_0)}^2
\\
& \le c(b) \Big(\frac{1}{|B_{8r}|}\int_{B_{8r}(x_0)}|\nabla u|^{\frac{2n}{n+2}}\Big)^{\frac{n+2}{n}}+\frac{C}{|B_{8r}|}\|\GG\|_{2,B_{8r}(x_0)}^2.
}

Combining the internal estimate \eqref{5.12} and the boundary estimate \eqref{5.17}, we conclude
\EQ{\label{5.19}
\frac{1}{|B_r|}\int_{B_r(x_0)} |\nabla u|^2 \le
c(b) \Big(\frac{1}{|B_{8r}|}\int_{B_{8r}(x_0)}|\nabla u|^{\frac{2n}{n+2}}\Big)^{\frac{n+2}{n}}+\frac{C}{|B_{8r}|}\|\GG\|_{2,B_{8r}(x_0)}^2,
}
for any $x_0$ in a big cube containing $\overline \Om$, and any $r\le R_0/6$.
We can now apply Proposition 1.1 of Giaquinta \cite[page 122]{Gia} (also see \cite[Proposition 3.7]{DongKim}) to get that $\nb u \in L^p_\loc$ for $p \in (2,p_0)$ for some $p_0 >2$, and
\EQ{\label{0619}
\bke{\fint_{B_r(x_0)} |\nabla u|^p}^{\frac 1p} \le
c_1\bke{\fint_{B_{8r}(x_0)} |\nabla u|^2}^{\frac 12} + c_1\bke{\fint_{B_{8r}(x_0)} |\GG|^p}^{\frac 1p},
}
for all $x_0 \in \overline \Om$ and $r\le R_0/6$, with
$p_0$ and $c_1$ depending only on $n$, $c(b)$, and $R_0$. Summing \eqref{0619} over a finite cover of $\overline \Om$ of balls of radius $R_0/6$, and using the a priori $W^{1,2}$-estimate in Lemma \ref{energy}, we get \eqref{eq5.1}.
\end{proof}

\begin{remark}
Tracking the proof, we find that the constant $p_0$ only depends on $n$, $L$, $\norm{b}_{L^{n,\infty}}$ and $R_0$, while the constant $c$ in \eqref{eq5.1} also depends on diam $\Om$.
\end{remark}

\section{Proof of Theorem \ref{main3}}\label{sec6}
We now prove Theorem \ref{main3} on the unique existence of $W^{1,q}$ solutions for $q\in (p_0',p_0)$ sufficiently close to $2$, when $b\in L^{n,\infty}(\Om)$ is not small.

\begin{proof}[Proof of Theorem \ref{main3}]

First we consider case $q>2$. Let $p_0(\Om,\|b\|_{L^{n,\infty}(\Om)})>2$ be the constant in Theorem \ref{Capriori} for given $\Om$ and $b$. We may assume $p_0<n$ (so that we can use \eqref{eq2.1} for $q \le p_0$).
Take any $q\in (2,p_0)$ and $\GG\in L^q(\Om)^{n\times n}$. By Lemma \ref{energy} we know that there exists a weak solution pair  $u\in W^{1,2}(\Omega)$ and $\pi\in L^2_0(\Omega)$ of \eqref{drifteq}. Next we apply  Theorem \ref{Capriori} and get that
\EQ{
\|u\|_{W^{1,q}(\Om)}\le C(b)\|\GG\|_{L^q(\Om)}.
}
This bound also implies uniqueness of the solutions. Since $u\in W^{1,q}$, $b\in L^{n,\infty}$, and $\div b=0$, we get that $\GG+b\otimes u\in L^q(\Om)$ by \eqref{eq2.1}, and we can apply Lemma \ref{pressure} to get that the pressure $\pi$ is indeed in $L^q_0(\Om)$ and
\[
\|\pi\|_{L^q(\Om)}\le C(b)\|\GG\|_{L^q(\Om)}.
\]
This proves the case $q>2$.

For case $q<2$ we will use a duality argument. First, we will prove a priori estimate in $W^{1,q}$ for $q\in(p_0',2)$. Suppose $u$ is a $q$-weak solution of \eqref{drifteq} with right hand side $\div\GG$. For any $\GG'  \in L^{q'}(\Omega)^{n \times n}$, $2<q'<p_0$, let $v$ be the unique $q'$-weak solution of the following system
\EQ{\label{drifteq_test}
-\Delta v -b\cdot\nabla v+\nabla \pi=\div \GG',\quad \div v=0, \quad v|_{\pd \Om}=0,
}
Therefore, we have an a-priori bound
\EQ{
\|v\|_{W^{1,q'}(\Om)}\le C(b)\|\GG'\|_{L^{q'}(\Om)},
}
guaranteed by the first part of this theorem. Next we use $u$ as a test function in equation \eqref{drifteq} for function $v$ and integrate by part to get that
\EQS{
\int_{\Om}\GG' :\nabla u\,dx&=-\int_{\Om}(\nabla v+bv):\nabla u \, dx=-\int_{\Om}(\nabla v-bu):\nabla v \, dx
\\
&=\int_{\Om} \GG:\nabla v\,dx\le \|\GG\|_{L^{q}(\Om)}\|\nabla v\|_{L^{q'}(\Om)}\le C(b)  \|\GG\|_{L^{q}(\Om)}\|\GG'\|_{L^{q'}(\Om)}.
}
Since the matrix function $\GG'\in L^{q'}(\Om)$ is arbitrary we get that
\EQ{\label{smallparpriori}
\|u\|_{W^{1,q}(\Om)}\le C(b)  \|\GG\|_{L^{q}(\Om)}.
}

To prove existence we use approximation approach. For $\GG\in L^q(\Om)$, choose a sequence  $\GG_n\in L^2(\Om)$ such that $\GG_n\rightarrow\GG$ in $L^q(\Om)$. There exists a sequence of weak $W^{1,2}$ solutions $u_n$ of \eqref{drifteq} with source term $\div \GG_n$ instead of $\div\GG$ and they satisfy \eqref{smallparpriori},
\[
\|u_n\|_{W^{1,q}(\Om)} \le C \|\GG_n\|_{L^{q}(\Om)} \le C\|\GG\|_{L^{q}(\Om)} .
\]
 Therefore there exists $u\in W^{1,q}(\Om)$ and a subsequence $u_n\rightharpoonup u$ in $W^{1,q}(\Om)$ weakly, with
$\|u\|_{W^{1,q}(\Om)} \le C\|\GG\|_{L^{q}(\Om)} $.
The last step is passing to the limit and constructing pressure. For any divergence free test function $\zeta\in C^{\infty}_{c,\sigma}(\Om)$, we have the weak form \eqref{weak-soln} for $u_n$,
\EQ{
\int_{\Om} (\nb u_n - b \otimes u_n): \nb \zeta   = -\int_\Om \GG_n : \nb \zeta.
}
We can pass to the limit $n\rightarrow\infty$ and get that
\EQ{
\int_{\Om} (\nb u - b \otimes u): \nb \zeta   = -\int_\Om \GG : \nb \zeta.
}
Hence $u$ is a $q$-weak solution of \eqref{drifteq} with right side $\div \GG$ satisfying the weak form \eqref{weak-soln}, uniqueness follows from the a priori estimate, similarly to the case $q>2$.
Lastly we apply Lemma \ref{pressure} to prove that there exists pressure $\pi\in L^q_0(\Om)$ and
\EQ{
\|\pi\|_{L^q(\Om)}\le C(b)  \|\GG\|_{L^{q}(\Om)}.
}
This finishes the proof of Theorem \ref{main3}.
\end{proof}

\section{Proof of Proposition \ref{main4}}\label{sec7}
In this section we provide a scheme of the proof of Proposition \ref{main4}, which is for the scalar equations \eqref{scalareq-conv} and \eqref{scalareq-div}.  We first state an $W^{1,2}$-result.

\begin{lemma}[Weak solutions for scalar equations with critical drifts]\label{scalar-weak-solution}
Let $\Om$ be a bounded Lipschitz domain in $\R^n$, $n\ge 3$.
Assume $b\in L^{n,\infty}(\Omega)^n$ and $\div b\ge 0$.
Then for any $F ,G\in L^2(\Omega)$,
there exist a unique weak solution $v\in W^{1,2}_{0}(\Omega)$
of  \eqref{scalareq-conv} with right hand side $G$, and a unique weak solution $u\in W^{1,2}_{0}(\Omega)$
of  \eqref{scalareq-div} with right hand side $F$. Moreover,
\EQ{
\|v\|_{ W^{1,2}(\Omega)}\le C\|G\|_{L^2(\Om)},\quad
\|u\|_{ W^{1,2}(\Omega)}\le C\|F\|_{L^2(\Om)},
}
for some constant $C=C(n,\Om)$ independent of $b$.
\end{lemma}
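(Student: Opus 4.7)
The strategy is Lax--Milgram applied separately to each equation, in the style of Lemma \ref{energy}. For the convection equation \eqref{scalareq-conv} I would introduce the bilinear form
\[
\mathcal{B}_c(v,\phi) = \int_\Omega \nabla v\cdot\nabla\phi - (b\cdot\nabla v)\phi\,dx,
\]
and for the divergence equation \eqref{scalareq-div}, after one integration by parts that shifts $\nabla$ off of $ub$ onto $\phi\in W^{1,2}_0$,
\[
\mathcal{B}_d(u,\phi) = \int_\Omega \nabla u\cdot\nabla\phi - u(b\cdot\nabla\phi)\,dx,
\]
both defined on $W^{1,2}_0(\Omega)\times W^{1,2}_0(\Omega)$. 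Boundedness is immediate from \eqref{eq2.1}, $\|bw\|_{L^2}\lec\|b\|_{n,\infty}\|w\|_{W^{1,2}}$, and the loads $\phi\mapsto -\int G\cdot\nabla\phi$ and $\phi\mapsto -\int F\cdot\nabla\phi$ are bounded on $W^{1,2}_0$ by $\|G\|_{L^2}$ and $\|F\|_{L^2}$, respectively (if instead the right-hand sides are read as $G,F$ themselves rather than their divergences, the loads are even more obviously bounded).

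The crux is coercivity. Testing $\mathcal{B}_c$ against $v$ and $\mathcal{B}_d$ against $u$, in both cases the drift contribution collapses to the same expression $-\int_\Omega b\cdot\nabla(w^2/2)\,dx$ for $w\in W^{1,2}_0(\Omega)$, because $(b\cdot\nabla w)w = b\cdot(w\nabla w) = b\cdot\nabla(w^2/2)$ pointwise. I would establish
\[
\int_\Omega b\cdot\nabla(w^2/2)\,dx \le 0
\]
by density. Choose $w_m\in C^\infty_c(\Omega)$ with $w_m\to w$ in $W^{1,2}$; then $w_m^2/2\in C^\infty_c(\Omega)$ is nonnegative, so the distributional inequality $\div b\ge 0$ gives $\int b\cdot\nabla(w_m^2/2) = -\langle \div b,w_m^2/2\rangle\le 0$. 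By Lemmas \ref{Holder} and \ref{Sobolev}, $w_m\nabla w_m\to w\nabla w$ in $L^{n/(n-1),1}(\Omega)$, and pairing this against $b\in L^{n,\infty}(\Omega)$ via Lorentz H\"older keeps $\int b\cdot\nabla(w^2/2)$ absolutely convergent and transfers the inequality to the limit. Consequently $\mathcal{B}_c(v,v)\ge\|\nabla v\|_2^2$ and $\mathcal{B}_d(u,u)\ge\|\nabla u\|_2^2$, and by Poincar\'e's inequality both forms are coercive on $W^{1,2}_0(\Omega)$ with constant depending only on $n$ and $\Omega$.

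Lax--Milgram then provides unique weak solutions $v,u\in W^{1,2}_0(\Omega)$ with the asserted $W^{1,2}$-bounds and constant $C=C(n,\Omega)$ independent of $b$. The main obstacle is the rigorous handling of $\div b\ge 0$ in the coercivity step: since $w^2/2$ is only in $W^{1,2n/(n+2)}_0(\Omega)$ rather than a legitimate test class for the distribution $\div b$ a priori, one must combine the approximation by $C^\infty_c$ functions with the Lorentz-space H\"older and Sobolev inequalities to guarantee $b\cdot(w\nabla w)\in L^1(\Omega)$; this is the essential ingredient beyond the divergence-free case of Lemma \ref{energy}. Once this is in place, the two solutions are produced in complete parallel, with the drift term swapping which argument of the bilinear form carries the derivative.
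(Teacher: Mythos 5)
Your proposal is correct and takes essentially the same route as the paper: both reduce to Lax--Milgram on $W^{1,2}_0(\Omega)$ with coercivity coming from the sign condition $\div b\ge 0$ applied to the nonnegative test function $w^2/2$. The paper states this in two lines by referring back to Lemma \ref{energy} and swapping identity \eqref{eq3.4} for the inequalities $-\int b\cdot\nabla(\tfrac12|v|^2)\ge 0$ and $\int \div(ub)\,u\ge 0$; you spell out the density argument (approximate $w$ by $C^\infty_c$, note $w_m^2/2\ge 0$, pass to the limit using $bw\in L^2$ and $\nabla w\in L^2$ so that $b\cdot(w\nabla w)\in L^1$), which is the content the paper leaves implicit.
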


The proof is similar to that for Lemma \ref{energy} for the vector equation \eqref{drifteq}. We do not need to deal with the pressure, and we replace \eqref{eq3.4}  by
\EQ{
-\int_\Om (b\cdot \nb) v\cdot v = - \int_\Om b\cdot \nb(\frac12|v|^2)%
\ge 0,\quad
\int_\Om \div (ub) u= -\int_\Om ub\cdot \nb u   \ge 0,
}
which keep the bilinear forms coercive.

Next we will prove an a-priori estimate in $W^{1,q}_0(\Om)$ for \eqref{scalareq-conv} with $2 < q < p_0$,
 which is similar to Theorem \ref{Capriori}.
Let $v$ be a $q$-weak solution of \eqref{scalareq-conv} with right hand side $G$.
We first consider the interior case and take $B_{\rho} =B_{\rho}(x_0)\subset\Omega$. Let $0<r<\rho$. Similarly take $\bar{v}=v-k$ where $k$ is an arbitrary real constant that we will choose later. Next we multiply the equation by $\eta^4 \bar{v}$ where $\eta$ is a smooth cutoff function on $B_{\rho}$,
with $\eta=1$ in $B_r$. Due to our assumption on drift $\div b\ge 0$ we need to estimate the drift term
\EQS{\label{7.0}
\int_{B_{\rho}} (b\cdot\nabla \bar v)\bar v\eta^4~dx=\frac{1}{2}\int_{B_{\rho}} b\cdot\nabla (\bar v^2)\eta^4~dx\le -\frac{1}{2}\int_{B_{\rho}} b\bar v^2\nabla\eta^4~dx.
}
Therefore we get a similar energy inequality as in Theorem \ref{Capriori} but without pressure
\EQ{\label{7.1}
\int_{B_{\rho}} |\nabla (\eta^2\bar{v})|^2
\le \int_{B_{\rho}} |\bar{v}|^2|\nabla \eta^2|^2+2\int_{B_{\rho}} |\bar{v}|^2|b||\nabla \eta|\eta^3
+\int_{B_{\rho}}\div G \cdot \bar{v}\eta^4.
}
Next we use the same estimates as in Theorem \ref{Capriori} to remove $\frac 14\int_{B_{\rho}} |\nabla \bar{v}|^2$ from the right side, then take $\rho=2r$
 and choose $k=(v)_{B_{\rho}}$  to get the following
\EQ{\label{7.2}
\frac{1}{|B_r|}\int_{B_r} |\nabla v|^2
\le c(b) \Big(\frac{1}{|B_{2r}|}\int_{B_{2r}}|\nabla v|^{\frac{2n}{n+2}}\Big)^{\frac{n+2}{n}}+\frac{c}{|B_{2r}|}\|G\|_{2,B_{2r}}^2.
}
The proof of the boundary case is done in a similar matter as Theorem  \ref{Capriori}: we do zero extension of function $v$ outside of $\Om$, and prove \eqref{7.2} for $x_0 \in \pd \Om$ and $r<R_0/12$. Finally we prove \eqref{7.2} for general $x_0 \in \overline \Om$ with radius $2r$ on the right side replaced by $8r$. By Proposition 1.1 of Giaquinta \cite[page 122]{Gia},
there exists $p_0(\Om,\|b\|_{L^{n,\infty}(\Om)})>2$ such that for $q\in(2,p_0),~\nabla v\in L^q$ and
\EQ{\label{7.3}
\|v\|_{W^{1,q}(\Omega)}\le c\|G\|_{L^q(\Omega)}.
}
This is our desired a priori estimate.

\medskip

Finally we can prove Proposition \ref{main4}.
\begin{proof}[Proof of Proposition \ref{main4}]
 We follow the same steps of the proof of Theorem \ref{main3}.  Let $p_0>2$ be the constant  from our a priori estimate \eqref{7.3}. Take any $q\in (2,p_0)$ and $G\in L^q(\Om)$. From $L^2$ theory (Lemma \ref{scalar-weak-solution}) we know that there exists a weak solution  $v\in W^{1,2}(\Omega)$ of \eqref{scalareq-conv}. Next we apply \eqref{7.3} and get that
\EQ{
\|v\|_{W^{1,q}(\Om)}\le C(b)\|G\|_{L^q(\Om)}.
}
This finishes the proof of (a).

For case (b) we will use a duality argument. First, we will prove an a priori estimate in $W^{1,q}$ for $q\in(p_0',2)$. Suppose $u$ is a $q$-weak solution of \eqref{scalareq-div} with right hand side $F$. For any $G  \in L^{q'}(\Omega)^{n}$, $2<q'<p_0$, let $v$ be the unique $q'$-weak solution of the dual system \eqref{scalareq-conv} with right hand side $G$ and
\EQ{
\|v\|_{W^{1,q'}(\Om)}\le C(b)\|G\|_{L^{q'}(\Om)},
}
guaranteed by the first part of this theorem. We use $u$ as a test function in equation \eqref{scalareq-conv} and use  \eqref{scalareq-div} to get that
\EQS{
\int_{\Om}G\cdot\nabla u\,dx&=-\int_{\Om}(\nabla v\cdot\nabla u-(b\nabla v) u )\, dx
\\
&=\int_{\Om} F\cdot\nabla v\,dx\le \|F\|_{L^{q}(\Om)}\|\nabla v\|_{L^{q'}(\Om)}\le C(b)  \|F\|_{L^{q}(\Om)}\|G\|_{L^{q'}(\Om)}.
}
Since function $G\in L^{q'}(\Om)$ is arbitrary we get that
\EQ{\label{scalarapri}
\|u\|_{W^{1,q}(\Om)}\le C(b)  \|F\|_{L^{q}(\Om)}.
}

Finally, we prove existence using the approximation approach: For $F\in L^q(\Om)$, choose a sequence  $F_n\in L^2(\Om)$ such that $F_n\rightarrow F$ in $L^q(\Om)$. By Lemma \ref{scalar-weak-solution}, there exists a sequence of weak $W^{1,2}$ solutions $u_n$ of \eqref{scalareq-div} with right side $F_n$ and they satisfy \eqref{scalarapri},
\[
\|u_n\|_{W^{1,q}(\Om)} \le C \|F_n\|_{L^{q}(\Om)} \le C\|F\|_{L^{q}(\Om)} .
\]
 Therefore there exists $u\in W^{1,q}(\Om)$ and a subsequence $u_n\rightharpoonup u$ in $W^{1,q}(\Om)$ weakly, with
$\|u\|_{W^{1,q}(\Om)} \le C\|F\|_{L^{q}(\Om)} $.
The last step is passing to the limit. For any smooth test function $\zeta\in C^{\infty}_{c}(\Om)$, we have the weak form of \eqref{scalareq-div} for $u_n$,
\EQ{
\int_{\Om} (\nb u_n - b u_n)\cdot \nb \zeta   = -\int_\Om F_n : \nb \zeta.
}
We can pass to the limit $n\rightarrow\infty$ to show that $u$ satisfies the weak form of \eqref{scalareq-div}. This gives (b).
\end{proof}
\section*{Acknowledgments}
We warmly thank Hongjie Dong, Hyunseok Kim, Hyunwoo Kwon and Tuoc Phan for very helpful comments and references. The research of both MC and TT was partially supported by Natural Sciences and Engineering Research Council of Canada (NSERC) under grant RGPIN-2023-04534.


\addcontentsline{toc}{section}{\protect\numberline{}{\hspace{2mm}References}}

\medskip

Misha Chernobai, Max Planck Institute for Mathematics in the Sciences, Leipzig  Inselstraße 22, 04103, Germany;
e-mail: mchernobay@gmail.com

\medskip

Tai-Peng Tsai, Department of Mathematics, University of British Columbia,
Vancouver, BC V6T 1Z2, Canada; e-mail: ttsai@math.ubc.ca

\end{document}